\numberwithin{equation}{section}
\newtheorem{theorem}{Theorem}
\newtheorem{lemma}{Lemma}
\theoremstyle{definition}
\newtheorem{remark}{Remark}
\newtheorem{example}{Example}
\newcommand{\cF}{\mathcal F}
\newcommand{\cP}{\mathcal P}
\newcommand{\cU}{\mathcal U}
\newcommand{\cV}{\mathcal V}
\newcommand{\cW}{\mathcal W}
\newcommand{\cT}{\mathcal T}
\newcommand{\cE}{\mathcal E}
\newcommand{\cD}{\mathcal D}
\newcommand{\cK}{\mathcal K}
\newcommand{\tD}{\text D}
\newcommand{\tN}{\text N}
\newcommand{\tR}{\text R}
\newcommand{\IR}{\mathbb R}
\newcommand{\IP}{\mathbb P}
\DeclareMathOperator{\Tr}{Tr}
\DeclareMathOperator{\meas}{meas}
\DeclareMathOperator{\diam}{diam}
\title[A posteriori estimates for cGM for an  
integro-differential eq.]
{\textbf{A posteriori error analysis for a continuous space-time 
finite element method for a hyperbolic integro-differential equation}}
\author[F.~Saedpanah]{Fardin Saedpanah}
\address{
Department of Mathematics, 
University of Kurdistan, P. O. Box 416, 
Sanandaj, Iran
}
\email{f.saedpanah@uok.ac.ir\\
           f\_saedpanah@yahoo.com}
\keywords{integro-differential equation, 
continuous Galerkin finite element method, 
convolution kernel, stability,  a posteriori estimate.}
\subjclass{65M60, 45K05}
\begin{document}

\begin{abstract}
An integro-differential equation of hyperbolic type, 
with mixed boundary conditions, is considered. 
A continuous space-time finite element method of degree one 
is formulated. 
A posteriori error representations based on space-time cells is
presented such that it can be used for adaptive strategies based
on dual weighted residual methods.
A posteriori error estimates based on weighted global 
projections and local projections are also proved.
\end{abstract}

\date{November 14, 2012}

\maketitle

\section{{\bf Introduction}}
We study the initial-boundary value problem  
(we use '$\cdot$' to denote the time derivative),
\begin{align}   \label{problem}
  \begin{aligned}
    &\ddot u(x,t)-\nabla\cdot\sigma(u;x,t)=f(x,t) 
      \quad && \textrm{in} \;\,\Omega\times (0,T),\\
    &u(x,t)=0 \quad&& \textrm{on}\;\Gamma_\text{D}\times (0,T),\\
    &\sigma(u;x,t)\cdot n=g(x,t)
      \quad&&\textrm{on}\;\Gamma_\tN\times (0,T),\\
    &u(x,0)=u^0(x),\quad\dot{u}(x,0)=v^0(x)
      \quad&&\textrm{in}\;\,\Omega,
  \end{aligned}
\end{align}
that is a hyperbolic type integro-differential equation arising, e.g., 
in modeling dynamic fractional/linear viscoelasticity. 
Here $u$ is the displacement vector, $f$ and $g$ represent, 
respectively, the volume and surface loads.
The stress $\sigma=\sigma(u;x,t)$ is determined by
\begin{equation*}
  \sigma(t)=\sigma_0(t)-\int_0^t\!\cK(t-s)\sigma_0(s)\,ds,
\end{equation*}
with
\begin{equation*}
  \sigma_0(t)=2\mu_0 \epsilon(t)+\lambda_0 \Tr (\epsilon(t))\text{I},
\end{equation*}
where $\text{I}$ is the identity operator,
$\epsilon$ is the strain which is defined by 
$\epsilon=\frac{1}{2}(\nabla u+(\nabla u)^T)$, and
$\mu_0,\lambda_0>0$ are elastic constants of Lam\'e type.
The kernel $\cK$ is considered to be either smooth (exponential), 
or no worse than weakly singular, so in both cases with the properties that
\begin{align}  \label{KernelProperty}
  \cK\geq 0,\quad \cK' \leq 0, \quad \|\cK\|_{L_1(\mathbb{R}^+)}=\kappa<1.
\end{align}
For examples of problems of this type see, e.g., \cite{StigFardin} 
and \cite{RiviereShawWhiteman2007} and their references. 

We note that, e.g., completely monotone functions,  
i.e., functions $b\in L_1(0,\infty)\cap \mathcal{C}^2(0,\infty)$, such that
\begin{equation*}
  (-1)^kD_t^k b(t) \ge 0,\quad t \in (0,\infty),\ k=0,1,2,
\end{equation*}
satisfy \eqref{KernelProperty}, 
when $\|b\|_{L_1(\mathbb{R}^+)} < 1$.
Hence, Mittag-Leffler type kernels, that are weakly singular and 
arise in fractional order viscoelasticity, 
satisfy \eqref{KernelProperty}, see, e.g., \cite{StigFardin}. 

Existence, uniqueness and regularity of solution of a problem
of the form \eqref{problem} has been studied in 
\cite{FardinarXiv:1203.4001}.  A posteriori analysis of
temporal finite element approximation of a parabolic type problem
and discontinuous Galerkin finite element approximation of a
quasi-static $(\ddot u\approx0)$ linear 
viscoelasticity problem has been studied, respectively, in
\cite{Stig4} and \cite{ShawWhiteman}. For analysis and numerical solution 
of integro-differential equations and related problems, 
from the extensive literature, we mention  
\cite{Stig4}, \cite{StigFardin}, \cite{McLeanThomee2010}, \cite{RiviereShawWhiteman2007}, 
and their references. 

Here we consider a continuous space-time finite element approximation 
of degree one, cG(1)cG(1), for problem \eqref{problem}. 
A similar method has been applied to the second order hyperbolic 
problems in \cite{BangerthGeigerRannacher}, where the main objective  
is goal-oriented adaptive discretization of the problems. 
We introduce a posteriori error representations, 
that can be a basis for goal-oriented adaptive strategies based on dual 
weighted residual (DWR) method, 
see \cite{BangerthGeigerRannacher} and \cite{BangerthRannacher:Book} 
for details and computational aspects on the DWR method. 
For examples of application of the DWR approach for problems in solid mechanics see 
\cite{KaramanouShawWarbyWhiteman2005}, 
\cite{ShawWarbyWhiteman2010}, and the references therein.

To evaluate the a posteriori error representations 
we need information about the continuous dual solution.
Such information has to be obtained either through a priori
analysis in form of bounds for the dual solution in certain 
Sobolev norms or through computation by solving the dual 
problem numerically. In this context we consider the former case 
and provide information through a priori analysis. 
To this end, we present two a posteriori error estimates based 
on global and local projections. 
A weighted global a posteriori error estimate is obtained, using 
global $L_2$-projections, for which the main framework is adapted from 
\cite{ErikssonEstepHansboJohnson}, and as an example of its specific application we refer to \cite{Johnson}. For the second a posteriori error 
estimate, that is based on local projections, we refer to 
\cite{BangerthGeigerRannacher}.  

The model problem \eqref{problem} is of hyperbolic type, and therefore 
our analysis are based on special treatment of discretization of the 
wave equation perturbed with a memory term. 
Here we only present the theory.

The memory term causes the growing amount 
of data that has to be stored and used in each time step. 
The most commonly used algorithms for this
integration are based on Lubich convolution quadrature \cite{Lubich} 
for fractional order
operators, see also \cite{LopezLubichSchadle} for an improved version. 
For examples of the application of this approch to overcome the problem 
with the growing amount of data, that has to be stored and used in time stepping 
methods, see \cite{Stig2}, \cite{Stig3}, \cite{Stig4}, \cite{LubichSloanThomee}, 
and \cite{McLeanThomeeWahlbin}. 
In particular see, e.g., \cite{Stig2} where sparse quadrature together with 
appropriate a posteriori error representaions and estimates have been 
used in adaptive strategies based on the DWR approach. 
We plan to address this issue together with numerical adaptation 
methods such as DWR approach in future work.  
However, we should note that this is not an issue for exponentially decaying 
memory kernels, in linear viscoelasticity, that are represented as a Prony series.  
In this case recurrence relationships can be derived which means recurrence formula 
are used for history updating, see \cite{KaramanouShawWarbyWhiteman2005} 
and \cite{ShawWhiteman} for more details. 

The outline of this paper is as follows. In $\S2$ we define
a weak form of \eqref{problem} and the corresponding
dual (adjoint) problem. 
In $\S3$ we formulate a continuous Galerkin method of degree 
one. 
Then in $\S4$ we present a posteriori error representations, from which,  
in $\S5$ we prove a weighted global a posteriori error estimate. 
A  localized a posteriori error estimate is also proved in $\S6$. 
We remark on special cases regarding the memory term and 
regularity of the convolution kernel, when it is needed.

\section{{\bf Weak formulation and stability}}
We let $\Omega \subset\mathbb{R}^d, \, d=2,3$, be a bounded polygonal domain
with boundary $\Gamma=\Gamma_\tD\cup\Gamma_\tN$ where
$\Gamma_\tD$ and $\Gamma_\tN$ are
disjoint and $\meas(\Gamma_\tD)\not=0$.
We introduce the function spaces
$H=L_2(\Omega)^d,\,H_{\Gamma_\tN}=L_2(\Gamma_\tN)^d,\,$
and
$V=\{v\in H^1(\Omega)^d:v\!\!\mid_{\Gamma_\tD}=0\}$.
We denote the norms in $H$ and $H_{\Gamma_\tN}$ by
$\|\cdot\|$ and $\|\cdot\|_{\Gamma_\tN}$, respectively. 
\subsection{Weak formulation}
We define a bilinear form (with the usual summation convention)
\begin{equation}\label{bilinear}
 a(v,w)=\int_{\Omega}\!\big(2\mu_0\epsilon_{ij}(v)\epsilon_{ij}(w) +
\lambda_0\epsilon_{ii}(v)\epsilon_{jj}(w)\big)\,dx,\quad v,w\in V,
\end{equation}
which is coercive on $V$, 
and we equip $V$ with the inner product
$a(\cdot,\cdot)$ and norm
$\|v\|_V^2=a(v,v)$. 
We define $Au=-\nabla\cdot\sigma_0(u)$, which is a selfadjoint, positive
definite, unbounded linear operator, with
$\cD(A)=H^2(\Omega)^d\cap V$, and we use the norms 
$\|v\|_s=\|A^{s/2}v\|$.

We use a ``velocity-displacement'' formulation of \eqref{problem}
which is obtained by introducing a new velocity variable.
Henceforth we use the new variables $u_1=u,\,u_2=\dot u$ and $u=(u_1,u_2)$
the pair of vector valued functions. 
Then the variational form is to find 
$u_1(t),\,u_2(t)\in V$ such that
$u_1(0)=u^0$, $u_2(0)=v^0$, and
\begin{equation} \label{weakform}
  \begin{split}
    &(\dot u_1(t),v_1)-(u_2(t),v_1)=0,\\
    &(\dot u_2(t),v_2) + a(u_1(t),v_2)
    - \int_0^t \cK(t-s) a(u_1(s), v_2) \, ds \\
    &\qquad\qquad\qquad\quad\quad
           = (f(t),v_2) + (g(t),v_2)_{\Gamma_N},
    \quad \forall v_1,v_2 \in V ,\,t\in (0,T).
  \end{split}
\end{equation}

Now we define the bilinear and linear forms
$B:\cU\times \cV \to \mathbb{R}$ and 
$L:\cV \to \mathbb{R}$  
by
\begin{align}  \label{BL}
  \begin{aligned}
    B(u,v)
    &=\int_0^T\!
      \Big\{(\dot u_2,v_2)+a(u_1,v_2)
        -\int_0^t\!\cK(t-s)a\big(u_1(s),v_2\big)\,ds\\
    &\quad +(\dot u_1,v_1)-(u_2,v_1)\Big\}\,dt
      +\big(u_1(0),v_1(0)\big)+\big(u_2(0),v_2(0)\big),\\
    L(v)&=\int_0^T\!\Big\{(f,v_2)+(g,v_2)_{\Gamma_\tN}\Big\}\,dt
      +\big(u^0,v_1(0)\big)+\big(v^0,v_2(0)\big),
  \end{aligned}
\end{align}
where
\begin{align}   \label{UV}
  \begin{aligned}
    \cU&=H^1(0,T;V)\times H^1(0,T;H),\\
    \cV&=\big\{v=(v_1,v_2):
      v\in L_2(0,T;H)\times L_2(0,T;V),
      v_i \text{ right continuous in }t \big\}.
  \end{aligned}
\end{align}
We note that if we change the order of the time integrals in the 
convolution term as well as changing the role of the variables $s,t$, 
we have the second variant of the bilinear form $B$, that is, 
\begin{align}  \label{B2}
  \begin{aligned}
    B(u,v)
    &=\int_0^T\!
      \Big\{(\dot u_2,v_2)+a(u_1,v_2)
        -\int_t^T\!\cK(s-t) a\big(u_1(t),v_2(s)\big)\,ds \\
    &\quad +(\dot u_1,v_1)-(u_2,v_1) \Big\}\,dt
      +\big(u_1(0),v_1(0)\big)+\big(u_2(0),v_2(0)\big).
  \end{aligned}
\end{align}
We use this variant of $B$ in $\S4$ for a posteriori error analysis.

The weak form \eqref{weakform} can be writen as: 
find $u\in \cU$ such that,
\begin{equation} \label{weakformprimary}
  B(u,v)=L(v),\quad \forall v\in\cV.
\end{equation}
Here the definition of the velocity $u_2=\dot u_1$ is enforced in the
$L_2$ sense, and the initial data are placed in the bilinear form in
a weak sense. A variant is used in \cite{StigFardin} where the
velocity has been enforced in the $H^1$ sense, without placing the initial data in the bilinear form. 
We also note that the initial data  are retained by the choice of 
the function space $\cV$, that
consists of right continuous functions with respect to time.

Our error analysis is based on the duality arguments, 
and therefore we formulate 
the dual form of \eqref{weakformprimary}. 
To this end, we define the bilinear and linear forms
$B_\tau^*:\cV^*\times\cU^*\to\IR,\,
L_\tau^*:\cV^*\to\IR$,
 for $\tau\in \mathbb{R}^{\ge 0}$, by
\begin{align}  \label{BLdual}
  \begin{aligned}
    B_\tau^*(v,z)
    &=\int_{\tau}^T\!
      \Big\{-(v_1,\dot z_1)+a(v_1,z_2)
        -\int_t^T\!\cK(s-t)a\big(v_1,z_2(s)\big)\,ds\\
    &\qquad - (v_2,\dot{z}_2)-(v_2,z_1) \Big\}\,dt
        +\big(v_1(T),z_1(T)\big)
        +\big(v_2(T),z_2(T)\big),\\
    L^*_\tau(v)
    &=\int_{\tau}^{T}\!
      \Big\{(v_1,j_1)+(v_2,j_2)\Big\}\,dt
        +\big(v_1(T),z_1^{T}\big)
        +(v_2(T),z_2^{T}),
  \end{aligned}
\end{align}
where $j_1,\,j_2$ and $z_1^{T},\,z_2^{T}$ represent, respectively,
the load terms and the initial data of the dual (adjoint) problem.
In case of $\tau=0$, we use the notation
$B^*,L^*$ for short.
Here
\begin{align}   \label{UVdual}
  \begin{aligned}
    \cU^*&=H^1(0,T;H)\times H^1(0,T;V),\\
    \cV^*&=\big\{v=(v_1,v_2):
      v\in L_2(0,T;V)\times L_2(0,T;H),
      v_i \text{ left continuous in }t \big\}.
  \end{aligned}
\end{align}
We note that, recalling \eqref{UV},  
$\cU\subset \cV^*,\,\cU^*\subset\cV$.

We also note that $B^*$ is the adjoint form of $B$. 
Indeed, integrating by parts with respect to time 
in $B$, then changing the order of integrals in the convolution 
term as well as changing the role of the variables $s,t$, we have,
\begin{equation}   \label{BequalBstar}
  B(u,v)=B^*(u,v),\quad \forall u\in\cU,\, v\in\cU^*.
\end{equation}

Hence, the variational form of the dual problem
is to find $z\in \cU^*$ such that,
\begin{equation}   \label{weakformdual}
  B^*(v,z)=L^*(v),\quad \forall v\in\cV^*,
\end{equation}
that is a weak formulation of
\begin{equation}   \label{z2dual}
  \ddot z_2+Az_2-\int_t^T\!
  \cK(s-t)Az_2(s)\,ds=j_1-\frac{\partial}{\partial t} j_2,
\end{equation}
with initial data $z_1^T,z_2^T$, and function $j=(j_1,j_2)$ that 
is defined by $L^*(w)=\int_0^T\!(w,j)\,dt$.

We note that, since $z$ is a solution of \eqref{weakformdual}, 
we have 
\begin{equation}   \label{z1z2}
  z_1=-\dot z_2-j_2.
\end{equation}
\subsection{Stability of the solution of the dual problem}
We know that stability estimates and the corresponding anlysis for 
dual problem is similar to the primal problem, however with opposite 
time direction. 
Hence, having a smooth or weakly singular kernel with 
\eqref{KernelProperty}, we can obtain the stability estimates 
\eqref{continuousstabilitydualineq} from, e.g., \cite{Stig1} or \cite{StigFardin} 
for the continuous dual solution. Here we state stability estimates for 
the continuous dual solution in the following lemma, 
and we omit the proof for short. 
We note that the stability constant in \eqref{continuousstabilitydualineq} 
does not depend on $t$, and Gronwall's lemma has not been used, 
see \cite{Stig1} and \cite{StigFardin}.  
See also \cite{RiveraMenzala1999}, \cite{RiviereShawWhiteman2007} and 
\cite{ShawWhiteman},  where stability estimates have been represented, 
in which the stability factor depends on $t$, due to Gronwall's lemma. 
\begin{lemma}
Let $z$ be the solution of the dual problem
\eqref{weakformdual} with sufficiently smooth data
$z_1^T,z_2^T, j_1,j_2$. Then, for some constant $C=C(\kappa)$, we have stability estimates
\begin{equation}   \label{continuousstabilitydualineq}
  \begin{split}
    \|z_1(t)\|_l+\|z_2(t)\|_{l+1}
    &\le C\Big\{\|z_1^T\|_l+\|z_2^T\|_{l+1}
    +\int_t^T\!\Big(\|j_1\|_l+\|j_2\|_{l+1} \Big)\,dr\Big\}.
  \end{split}
\end{equation}
\end{lemma}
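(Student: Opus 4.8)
The plan is to prove the stability estimate \eqref{continuousstabilitydualineq} by an energy argument applied directly to the dual problem \eqref{z2dual}, exploiting the sign structure in \eqref{KernelProperty} to avoid Gronwall's inequality. Since the dual problem runs backward in time (its data are prescribed at $t=T$ and the convolution integrates forward from $t$ to $T$), I would first reverse time by setting $w(r)=z_2(T-r)$, turning \eqref{z2dual} into a standard forward hyperbolic integro-differential equation $\ddot w + Aw - \int_0^r \cK(r-s)Aw(s)\,ds = \tilde{\jmath}(r)$ with initial data at $r=0$ given by $z_2^T$ and (via \eqref{z1z2}) $z_1^T$. This reduces matters to the primal-type stability theory for which the framework of \cite{Stig1} and \cite{StigFardin} applies.

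\textbf{The energy identity.} The core step is to test the equation with $\dot w$ (in the $A^l$-weighted inner product, to land in the norms $\|\cdot\|_l$ and $\|\cdot\|_{l+1}$) and integrate over $(0,r)$. The terms $(\ddot w,\dot w)_l$ and $(Aw,\dot w)_l$ assemble into $\frac{1}{2}\frac{d}{dr}\big(\|\dot w\|_l^2+\|w\|_{l+1}^2\big)$, giving the undamped wave energy. The genuinely delicate term is the convolution contribution $\int_0^r \cK(r-s)\,a_l\big(w(s),\dot w(r)\big)\,ds$. The standard device — this is exactly where the hypotheses $\cK\ge 0$, $\cK'\le 0$, $\|\cK\|_{L_1}=\kappa<1$ enter — is to rewrite this memory term, after integrating in $r$, as a quadratic form in $w$ that is controlled by the energy with a factor involving $\kappa$. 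Concretely one uses the identity that expresses $\int_0^r\!\int_0^t \cK(t-s)a_l(w(s),\dot w(t))\,ds\,dt$ in terms of $\cK(0)$, $\cK(t)$, and $\cK'$, so that the positivity of $\cK$ and the sign of $\cK'$ make the resulting boundary and interior contributions have a favorable sign, while the $L_1$-bound $\kappa<1$ ensures the memory term is dominated by the elastic energy $\|w\|_{l+1}^2$.

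\textbf{Closing the estimate.} After the memory term is absorbed, one obtains an inequality of the form $\|\dot w(r)\|_l^2 + (1-\kappa)\|w(r)\|_{l+1}^2 \le (\text{data at }r=0)^2 + 2\int_0^r (\tilde{\jmath},\dot w)_l\,ds$ with no extra positive multiple of the energy on the right-hand side. The forcing term is handled by writing $2\int_0^r(\tilde{\jmath},\dot w)_l\,ds \le 2\big(\sup_{[0,r]}\|\dot w\|_l\big)\int_0^r\|\tilde{\jmath}\|_l\,ds$ and using a standard $\sup$-norm trick (choosing $r$ to nearly attain the supremum of the energy, then dividing through by the common factor), which yields the linear-in-data bound \emph{without} an exponential Gronwall factor. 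This is precisely the point emphasized in the remark preceding the lemma: the constant $C=C(\kappa)$ depends only on $\kappa$. Translating back via $w(r)=z_2(T-r)$ and recovering $\|z_1\|_l$ from \eqref{z1z2} (so that $\|z_1(t)\|_l \le \|\dot z_2(t)\|_l + \|j_2(t)\|_l$, with $\|\dot z_2\|_l=\|\dot w\|_l$ already bounded and $\|j_2\|_l$ absorbed into the data term at the appropriate regularity) produces \eqref{continuousstabilitydualineq} in the stated norms.

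\textbf{Main obstacle.} The hard part is the careful bookkeeping of the convolution term under differentiation of the energy: one must integrate by parts in the memory integral in exactly the right way to expose the signs $\cK\ge 0$ and $\cK'\le 0$, and to split off a full-derivative piece that recombines with the elastic energy. Getting the signs right so that \emph{no} Gronwall step is needed — i.e., ensuring every error-producing term is either of favorable sign or bounded by $\kappa$ times the already-present energy — is the crux. Because this is a known computation in \cite{Stig1} and \cite{StigFardin}, I would cite those for the detailed convolution estimate and only indicate the time-reversal reduction and the final $\sup$-norm closing argument, which is why the author legitimately omits the full proof here.
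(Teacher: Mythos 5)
Your overall strategy --- reversing time to obtain a forward problem, an energy argument exploiting $\cK\ge 0$, $\cK'\le 0$, $\|\cK\|_{L_1(\mathbb{R}^+)}=\kappa<1$ so that no Gronwall factor appears, and a sup-norm closing argument, with the detailed convolution identity deferred to \cite{Stig1}, \cite{StigFardin} --- is exactly the route the paper indicates (it omits the proof and cites those same references). However, there is a genuine gap in your treatment of the load terms, and it stems from working with the second-order scalar equation \eqref{z2dual} instead of the first-order (velocity--displacement) dual system that the paper actually sets up in \eqref{BLdual}. In the second-order form the forcing is $\tilde{\jmath}=j_1-\partial_t j_2$, so your bound $2\int_0^r(\tilde{\jmath},\dot w)_l\,ds\le 2\big(\sup_{[0,r]}\|\dot w\|_l\big)\int_0^r\|\tilde{\jmath}\|_l\,ds$ puts $\int\|\partial_t j_2\|_l\,dr$ on the right-hand side, and your recovery of $z_1$ via \eqref{z1z2} requires the pointwise norm $\|j_2(t)\|_l$ (the terminal velocity $\dot w(0)=-\dot z_2(T)=z_1^T+j_2(T)$ brings in $j_2(T)$ as well). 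None of these quantities is controlled by the term $\int_t^T\|j_2\|_{l+1}\,dr$ appearing in \eqref{continuousstabilitydualineq}: a pointwise value of $j_2$, or the time integral of its derivative, cannot be ``absorbed'' into a time integral of $\|j_2\|_{l+1}$. As written, your argument therefore proves a variant of the lemma with different (and in this respect stronger) data norms; it yields the stated estimate only in the homogeneous case $j_1=j_2=0$.

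The repair is to run the energy argument directly on the dual system in first-order form, which is what the structure of \eqref{BLdual} and of the estimate itself suggests: $-\dot z_1+Az_2-\int_t^T\cK(s-t)Az_2(s)\,ds=j_1$ and $-\dot z_2-z_1=j_2$, with the energy $\tfrac12\big(\|z_1\|_l^2+\|z_2\|_{l+1}^2\big)$, i.e.\ precisely the quantity the lemma estimates. Writing $(v,w)_l=(A^{l/2}v,A^{l/2}w)$, differentiation of this energy makes the cross terms $(Az_2,z_1)_l$ cancel, the memory term is handled by the convolution machinery of the cited references, and $j_2$ enters only through $(j_2,A^{l+1}z_2)\le\|j_2\|_{l+1}\|z_2\|_{l+1}$ --- which is exactly why the lemma carries the norm $\|j_2\|_{l+1}$, one order higher than $\|j_1\|_l$, and no time derivative of $j_2$. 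Your sup-norm closing argument then gives \eqref{continuousstabilitydualineq} verbatim. A secondary caution: the identity you invoke involving $\cK(0)$ is not available for the weakly singular kernels the paper allows, since then $\cK(0)=\infty$; the decompositions in \cite{Stig1}, \cite{StigFardin} are arranged so that the kernel is never evaluated at the origin.
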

We note that, if we set $j_1=j_2=0$ in \eqref{continuousstabilitydualineq} 
and recall \eqref{z2dual}--\eqref{z1z2}, 
then for $r\in[t,T]$ and $l\in \mathbb{R}$, we have
\begin{equation*}
  \begin{split}
    \|\dot z_1(r)\|_{l-1}
    &=\|\ddot z_2(r)\|_{l-1}
    =\big\| -Az_2(r)+\int_r^T\! \cK(s-r)Az_2(s)\ ds\big\|_{l-1}\\
    &\le \| Az_2(r)\|_{l-1} +\int_r^T\! \cK(s-r) \| Az_2(s)\|_{l-1}\ ds,
  \end{split}
\end{equation*}
that, recalling $\|\cK\|_{L_1(\mathbb{R})}=\kappa$, for some $C=C(\kappa)$ implies
\begin{equation*}
  \|\dot z_1(r)\|_{l-1}
  =\|\ddot z_2(r)\|_{l-1}\\
  \le (1+\kappa) \max_{r \leq s\leq T} \| Az_2(r)\|_{l-1} 
  \le (1+\kappa) \max_{r \leq s\leq T} \| z_2(r)\|_{l+1} .
\end{equation*}
From this and \eqref{continuousstabilitydualineq} we conclude the stability inequality 
\begin{equation}   \label{stabilitydualineq}
  \begin{split}
    \|\dot z_1(t)\|_{l-1}+\|z_1(t)\|_l+\|z_2(t)\|_{l+1}
    &\le C\big\{\|z_1^T\|_l+\|z_2^T\|_{l+1}\big\}.
  \end{split}
\end{equation}

\section{{\bf The continuous Galerkin method}}
Here we formulate the continuous Galerkin method of order one, 
cG(1)cG(1), for the primary and dual problems \eqref{weakformprimary} 
and \eqref{weakformdual}. 
\subsection{The cG method}
Let $0=t_0<t_1<\cdots<t_{n-1}<t_n<\cdots<t_N=T$ be a partition of the time interval $[0,T]$.
To each discrete time level $t_n$ we associate a triangulation $\cT_h^n$ of the polygonal domain $\Omega$ with the mesh function,
\begin{equation}   \label{hn}
  h_n(x)=h_K=\diam(K),\quad x\in K,\,K\in\cT_h^n,
\end{equation}
and a finite element space $V_h^n$ consisting of continuous
piecewise linear polynomials.
For each time subinterval $I_n=(t_{n-1},t_n)$ of length $k_n=t_n-t_{n-1}$,
we define intermediate triangulaion $\bar \cT_h^n$ which is composed
of mutually finest meshes of the neighboring meshes
$\cT_h^n,\,\cT_h^{n-1}$ defined at discrete time levels $t_n,\,t_{n-1}$, respectively.
The mesh function $\bar h_n$ is then defined by
\begin{equation}   \label{barhn}
  \bar h_n(x)=\bar h_K=\diam(K),\quad x\in K,\,K\in\bar\cT_h^n.
\end{equation}
Correspondingly, we define the finite element spaces
$\bar V_h^n$ consisting of continuous piecewise linear polynomials.
This construction is used in order to allow continuity in time of the
trial functions when the meshes change with time.
Hence we obtain a decomposition of each time slab
$\Omega^n=\Omega\times I_n$ into space-time cells
$K^n=K\times I_n,\,K\in \bar\cT_h^n$
(prisms, for example, in case of $\Omega\subset \mathbb{R}^2$).
We note the difference between the mesh functions $h_n$ and
$\bar h_n$, and this is important in our a posteriori error
analysis.
The trial and test function spaces for the discrete form are,
respectively:
\begin{equation}   \label{discretespaces}
  \begin{split}
    \cU_{hk}&=\Big\{U=(U_1,U_2):
      U \text{ continuous in } \Omega\times [0,T],
        U(x,t)|_{I_n} \text{ linear in } t,\\
      &\qquad\qquad\qquad\qquad\
        U(\cdot,t_n)\in (V_h^n)^2, U(\cdot,t)|_{I_n}
          \in (\bar V_h^n)^2 \Big\},\\
    \cV_{hk}&=\Big\{V=(V_1,V_2):
      V(\cdot,t) \text{ continuous in } \Omega,
         V(\cdot,t)|_{I_n}\in (V_h^n)^2,\\
       &\qquad\qquad\qquad\qquad\
         V(x,t)|_{I_n} \text{ piecewise constant in } t\Big\}.
  \end{split}
\end{equation}
We note that global continuity of the trial functions in $\cU_{hk}$ 
requires the use of `\textit{hanging nodes}' if the spatial mesh changes 
across a time level $t_n$. 
In these `irregular' nodal points the unknowns are 
eliminated by interpolating values at neighboring `regular' nodal points, 
see \cite{BangerthGeigerRannacher}, \cite{Svensson2006} 
and the references therein for practical aspects. 
We allow one hanging node per edge or face.
On the other hand, 
the test functions, which are allowed to be discontinuous in time, 
are defined in the time slabs $\Omega\times I_n$ on the spatial meshes corresponding 
to $t_n$. 
In this discretization trial functions have
support in both time intervals $I_n$ and $I_{n+1}$ adjacent to time instants $t_n$. 
Therefore on each time interval $I_n$, the trial functions defined at time 
instant $t_{n-1}$ on a mesh $\cT_h^{n-1}$ overlap with test functions defined 
at time instant $t_n$ on a mesh $\cT_h^n$.  
Hence, 
to form space-time integrals of multiplication of trial functions and test functions,  
we need the `union' of the two triangulations, that we have denoted by $\bar\cT_h^n$. 
 We also note that, due to the structure of the elements of 
 $\bar\cT_h^n$, computations are feasible with reasonable effort if the grids 
 $\cT_h^{n-1}$ and  $\cT_h^n$ are related. Hierarchically structured spatial 
meshes, where  $\bar\cT_h^n$ is the set of most refined cells from the two grids, 
reduces the substantial work caused by the mesh transfer  from 
one time level to the next. 
More details and references on the practical implementation  
can be found in \cite{BangerthGeigerRannacher}.

In the construction of $\cU_{hk}$ and $\cV_{hk}$ we have associated
the triangulation $\cT_h^n$ with discrete time levels instead of the time
slabs $\Omega^n$, and in the interior of time slabs we let $U$ be from the
union of the finite element spaces defined on the triangulations at the
two adjacent time levels.
Associating triangulation with time slabs instead of time levels would
yield a variant scheme which includes jump terms due to discontinuity
at discrete time leveles, when coarsening happens.
This means that there are extra degrees of freedom that one might use
suitable projections for transfering solution at the time levels $t_n$,
 see \cite{StigFardin}.

The continuous Galerkin method, based on the variational formulation \eqref{weakform}, is to find $U\in\cU_{hk}$ such that,
\begin{equation}   \label{cG}
  B(U,V)=L(V),\quad \forall\, V\in \cV_{hk}.
\end{equation}
The Galerkin orthogonality, with $u=(u_1,u_2)$ being the exact solution
of \eqref{weakform}, is then,
\begin{equation}   \label{Galerkinorthogonality}
  B(U-u,V)=0,\quad \forall\, V\in \cV_{hk}.
\end{equation}
From \eqref{cG} we can recover the time stepping scheme,
\begin{equation}   \label{cG2}
  \begin{split}
    &\int_{I_n} \!\Big\{(\dot{U}_1,V_1)-(U_2,V_1) \Big\}\,dt=0,\\
    &\int_{I_n}\!\Big\{(\dot{U}_2,V_2)+a(U_1,V_2)
       -\int_0^t \! \cK(t-s)a\big(U_1(s),V_2(t)\big)\,ds\Big\}\,dt\\
    &\qquad\qquad
      =\int_{I_n}\!
      \big\{(f,V_2)\,dt+(g,V_2)_{\Gamma_\tN}\big\}\, dt ,
      \quad\forall\, (V_1,V_2)\in \cV_{hk},\\
    &U_1(0)=u_h^0,\quad U_2(0)=v_h^0,
  \end{split}
\end{equation}
for suitable choice of $u_h^0,v_h^0 \in V_h^0$ as approximations of
the initial data $u^0,v^0$. 

We define the orthogonal projections 
$\cP_{h,n}:H\to V_h^n$ and $\cP_{k,n}:L_2(I_n)^d\to\IP^d_0(I_n)$,
respectively, by
\begin{align}  \label{projections}
  \begin{aligned}
    (\cP_{h,n}v-v,\chi)&=0,&&\forall v\in H,\,
      \chi\in V_h^n,\\
    \int_{I_n}\!\!(\cP_{k,n}v-v)\cdot \psi\, dt&=0,&&
       \forall  v\in L_2(I_n)^d, \,\psi\in \IP^d_0(I_n)\,,
  \end{aligned}
\end{align}
with $\IP_0^d$ denoting the set of all vector-valued constant
polynomials. Correspondingly, we define $\cP_hv$
and $\cP_kv$ for
$t\!\in\! I_n\,(n=1,\cdots,N)$, by
$(\cP_hv)(t)=\cP_{h,n}v(t)$ and
$\cP_kv=\cP_{k,n}(v\!\!\mid_{I_n})$.
We note that, as a natural choice, we can set the initial data in \eqref{cG2} as
\begin{equation}   \label{initialdata}
  u_h^0=\cP_h u^0,\quad v_h^0=\cP_h v^0.
\end{equation}

We also note that, when we do not change the spatial mesh or just refine the
spatial mesh from one time level to the next one, i.e., 
$  V_h^{n-1}\subset V_h^n,\ n=1,\dots,N$, 
then we have $\bar V_h^n=V_h^n$. 

We introduce the linear operator
$A_{n,r}:V_h^r\to V_h^n$ by
\begin{equation*}
  a(v_r,w_n)
   =(A_{n,r}v_r,w_n),\quad\forall v_r\in V_h^r,
    \,w_n \in V_h^n.
\end{equation*}
We set $A_n=A_{n,n}$, with discrete norms
\begin{equation*}
  \| v_n\|_{h,l}=\| A_{n}^{l/2} v_n\|
     =\sqrt{(v_n,A_n^l v_n)},\,\quad v_n\in V_h^n\,\,
       \textrm{and}\,\, l\in \mathbb{R}\,,
\end{equation*}
and $A_h$ so that $A_h v=A_n v$ for $v \in V_h^n$.
We use $\bar A_h$ when it acts on $\bar V_h^n$.
\section{{\bf A posteriori error estimation: error representation}}
Having certain regularity on the data, i.e., initial data $u^0,v^0$
and the force terms $f,g$, there are still two types of limitation
for higher global regularity of a weak solution of \eqref{problem}.
One is due to the mixed Dirichlet-Neumann boundary condition.
This type of boundary condition are natural in practice, and a pure
Dirichlet boundary condition cannot be realistic in applications.
Other limitation is the singularity of the convolution kernel $\cK$.
This means that even with the pure Dirichlet boundary condition, higher
regularity of a weak solution is limited, 
see \cite{StigFardin}, \cite{FardinarXiv:1203.4001},
though with smoother kernels we can get higher regularity.
These, and other general motivations such as no practical use of
a priori error estimates, call for adaptive meshes based on
a posteriori error analysis.

Here a space-time cellwise error representation is given.
The main framework is adapted from \cite{BangerthRannacher:Book}, and
a general linear goal functional $L^*(\cdot)$ is used. 
For an example of a global goal functional, see Example \ref{Example}. 
This error representation can be used for goal-oriented
adaptive strategies based on dual weighted residual (DWR) method. 
For more details on dual weighted residual method and
its practical aspects for differential equations,
see \cite{BangerthGeigerRannacher}, 
\cite{BangerthRannacher:Book} and references therein.

In the following, we recall and define some notations for this section. 
We denote the space-time cells $K^n=K\times I_n$ 
and $\partial K^n=\partial K\times I_n$, for $K \in \bar\cT_h^n$, 
and it should be noticed that $\partial K^n$ is not the boundary 
of $K^n$. 
Also for a simplex $K$, we define the inner products 
$(\cdot,\cdot)_{K^n}=\int_{I_n}(\cdot,\cdot)_{K}\  dt$,  
$(\cdot,\cdot)_{\partial K^n}=\int_{I_n}(\cdot,\cdot)_{\partial K}\  dt$,
and the corresponding $L_2$-norms 
\begin{equation*}
  \begin{split}
    &\| \cdot\|_{K^n}=\Big( \int_{I_n} \|\cdot\|_{K}^2\  dt \Big)^{1/2}
      =\Big( \int_{I_n} \|\cdot\|_{L_2(K)}^2\  dt \Big)^{1/2}, \\
    &\| \cdot\|_{\partial K^n}
      =\Big( \int_{I_n} \|\cdot\|_{\partial K}^2\  dt \Big)^{1/2}
      =\Big( \int_{I_n} \|\cdot\|_{L_2(\partial K)}^2\  dt \Big)^{1/2}.
  \end{split}
\end{equation*}
We also denote
\begin{equation}   \label{convolution}
  (\cK*v)^j(t)=\int_{t_{j-1}}^{t_j\wedge t}\cK(t-s)v(s)\ ds. 
\end{equation}
Throughout we use the usual notations 
$a\wedge b = \min\{a,b\}$ and $a\vee b =\max\{a,b\}$. 

Now we present three a posteriori error representations, 
and we note that the second error representation, \eqref{errorrep2}, 
is space-time cellwise.


\begin{theorem}
Let $u$ and $U$ be the solutions of \eqref{weakformprimary} 
and \eqref{cG}, respectively, 
and $L^*(\cdot)$ be the linear functional defined in \eqref{BLdual}.
Then, denoting the error $e=U-u$, we have the error representations
\begin{equation}   \label{errorrep1}
  L^*(e)=\sum_{K\in\cT_h^0}\Theta_{0,K}
     +\sum_{n=1}^N \sum_{K\in\bar\cT_h^n}
     \sum_{i=1}^4 \Theta_{i,K}^n
     +\sum_{n=1}^N \sum_{j=1}^n\sum_{K\in\bar\cT_h^j}\Theta_{5,K}^{n,j},\\
\end{equation}
\begin{equation}   \label{errorrep2}
  L^*(e)=\sum_{K\in\cT_h^0}\Theta_{0,K}
     +\sum_{n=1}^N \sum_{K\in\bar\cT_h^n}
     \sum_{i=1}^5 \Theta_{i,K}^n,\\
\end{equation}
\begin{equation}  \label{errorrep3}
  L^*(e)=\sum_{K\in\cT_h^0}\Theta_{0,K}
     +\sum_{n=1}^N \sum_{K\in\bar\cT_h^n}
     \sum_{i=1}^4 \Theta_{i,K}^n
     +\sum_{n=1}^N \sum_{j=n}^N\sum_{K\in\bar\cT_h^j}\Theta_{5,K}^{N,j},
\end{equation}
where, with $z_{hk}\in\cV_{hk}$ being an approximation of the dual
solution $z$ and $E_{hk}z=z_{hk}-z$ being the error operator,
\begin{equation}   \label{thetas}
  \begin{split}
    &\Theta_{0,K}=
     \big(U_1(0)-u^0,E_{hk}z_1(0)\big)_{K}
     +\big(U_2(0)-v^0,E_{hk}z_2(0)\big)_{K},\\
    &\Theta_{1,K}^n=
     (\dot U_1-U_2,E_{hk}z_1)_{K^n},\ \quad
     \Theta_{2,K}^n=
      (\dot U_2-f,E_{hk}z_2)_{K^n},\\
    & \Theta_{3,K}^n=
      (g_d-g,E_{hk}z_2)_{\partial K^n}, \qquad 
      \Theta_{4,K}^n=
      (r_d,E_{hk}z_2)_{\partial K^n},\\
    &  \Theta_{5,K}^{n,j}=
      -\int_{I_n}\Big((\cK*r_d)^j(t),E_{hk}z_2(t)\Big)_{\partial K}\ dt, \\
    &\Theta_{5,K}^n=
      -\Big(r_d,\int_t^T\! \cK(s-t)E_{hk}z_2(s)\,ds\Big)_{\partial K^n}\\
    &  \Theta_{5,K}^{N,j}=
      -\int_{I_n}\Big(r_d(t),
        \int_{t_{j-1}\vee t}^{t_j}\cK(s-t)E_{hk}z_2(s)\ ds\Big)_{\partial K}\ dt.
  \end{split}
\end{equation}
Here $r_d$ are the residuals representing the jumps of the normal
derivatives $\sigma_0(U_1)\cdot n$, which are determined by
\begin{equation}   \label{rh}
  r_d|_\Gamma =
  \begin{cases}
    -\frac{1}{2}[\sigma_0(U_1)\cdot n]
     &\text{if }\ \Gamma\subset\partial K
       \setminus\partial\Omega,\\
    0&\text{if }\ \Gamma\subset \partial\Omega,
  \end{cases}
\end{equation}
and $g_d$ are the contribution from the Neumann boundary condition 
defined by
\begin{equation}   \label{gh}
  g_d|_\Gamma =
  \begin{cases}
    \sigma(U_1)\cdot n
      =\Big(\sigma_0(U_1)-\int_0^t\!\cK(t-s)\sigma_0(U_1(s))\ ds\Big)\cdot n
      &\text{if }\ \Gamma\subset\partial K\cap\Gamma_{\tN},\\
    0&\text{otherwise}.
  \end{cases}
\end{equation}
\end{theorem}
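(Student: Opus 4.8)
The plan is to run the standard duality--residual argument, adapted to the velocity--displacement pair and to the memory term. \emph{First}, since $e=U-u\in\cU\subset\cV^*$, I would test the dual problem \eqref{weakformdual} with $v=e$ to obtain $L^*(e)=B^*(e,z)$, and then invoke the adjoint identity \eqref{BequalBstar} (legitimate because $e\in\cU$ and $z\in\cU^*$) to pass to the primal form $L^*(e)=B(e,z)$. Because $z\in\cU^*\subset\cV$ and every $z_{hk}\in\cV_{hk}\subset\cV$, Galerkin orthogonality \eqref{Galerkinorthogonality} gives $B(e,z_{hk})=0$; combined with $B(u,\cdot)=L(\cdot)$ on $\cV$ this yields the master identity
\begin{equation*}
  L^*(e)=B(e,z-z_{hk})=B(U,z-z_{hk})-L(z-z_{hk}),\qquad z-z_{hk}=-E_{hk}z,
\end{equation*}
which isolates the computable residual of $U$ tested against the dual approximation error and is the common source of all three representations.

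\emph{Next} I would substitute the definitions of $B$ and $L$ from \eqref{BL} and expand the right-hand side termwise, restricting each time integral to the slabs $I_n$ and each spatial integral to the cells $K\in\bar\cT_h^n$. The terms not involving $\cK$ produce directly the interior volume residuals $(\dot U_1-U_2,\cdot)$ and $(\dot U_2-f,\cdot)$, giving $\Theta_{1,K}^n$ and $\Theta_{2,K}^n$, while the pairings at $t=0$ localized to $K\in\cT_h^0$ give $\Theta_{0,K}$. The essential manipulation is a cellwise spatial integration by parts of the elliptic form $a(U_1,\cdot)$: since $U_1$ is piecewise linear, the volume term $\nabla\cdot\sigma_0(U_1)$ vanishes on element interiors, leaving only face integrals of $\sigma_0(U_1)\cdot n$ over $\partial K$. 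Summing over $K$, the interior faces combine into the jump residual $r_d$ of \eqref{rh} (hence $\Theta_{4,K}^n$), the faces on $\Gamma_\tN$ combine with the load $(g,\cdot)_{\Gamma_\tN}$ and the boundary trace of the memory stress into $g_d-g$ of \eqref{gh} (hence $\Theta_{3,K}^n$), and faces on $\Gamma_\tD$ drop out because $E_{hk}z_2$ vanishes there. Tracking the factor $\tfrac12$ and the orientation of the jump in $r_d$, so that the collected terms match the signs in \eqref{thetas}, is the careful but routine part here.

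\emph{The heart of the matter} is the memory term $\int_0^t\cK(t-s)a\big(U_1(s),\cdot\big)\,ds$, and it is precisely what produces three distinct forms. After the same cellwise integration by parts, its interior-face contributions carry the time convolution, and the three representations correspond to three organizations of the resulting double time integral $\int_0^T\!\!\int_0^t$. For \eqref{errorrep1} I would keep the Volterra ordering and split the inner integral over subintervals, $\int_0^t=\sum_{j\le n}\int_{t_{j-1}}^{t_j\wedge t}$, which is exactly the cellwise convolution $(\cK*r_d)^j$ of \eqref{convolution} and yields the double sum $\sum_n\sum_{j\le n}\Theta_{5,K}^{n,j}$. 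For the space--time cellwise form \eqref{errorrep2} I would instead start from the second variant \eqref{B2} of $B$, in which the $s,t$ integrations are swapped so that the convolution falls on $E_{hk}z_2$; this keeps everything inside a single slab sum and produces $\Theta_{5,K}^n$. The third form \eqref{errorrep3} arises from the swapped ordering together with the complementary subinterval splitting $\sum_{j\ge n}\int_{t_{j-1}\vee t}^{t_j}$, giving $\Theta_{5,K}^{N,j}$.

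\emph{The main obstacle} I anticipate is this treatment of the convolution rather than the spatial localization, which follows the usual elliptic/wave-equation pattern. One must keep the spatial jumps $r_d$ and the temporal convolution mutually consistent through all three rearrangements, verify that the reindexing of the double time sum is exact and not merely valid up to endpoint-of-summation terms, and confirm via \eqref{B2} together with \eqref{BequalBstar} that interchanging the order of integration leaves $B$ unchanged, so that the three identities genuinely represent the \emph{same} functional $L^*(e)$. Careful index and sign bookkeeping across the meshes $\bar\cT_h^n$ and the slabs $I_n$ is what makes the memory contribution delicate, while the remaining residual collection is standard.
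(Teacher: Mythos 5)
Your proposal reproduces the paper's proof essentially step for step: the same master identity obtained from duality, the adjoint relation \eqref{BequalBstar}, Galerkin orthogonality, and $B(u,\cdot)=L(\cdot)$; the same cellwise spatial integration by parts (using that $\sigma_0(U_1)$ is elementwise constant) collecting the jump residual $r_d$, the Neumann contribution $g_d-g$, and dropping $\Gamma_\tD$ faces; and the same three treatments of the memory term, namely the Volterra splitting $\int_0^t=\sum_{j\le n}\int_{t_{j-1}}^{t_j\wedge t}$ for \eqref{errorrep1}, the swapped variant \eqref{B2} with the convolution acting on $E_{hk}z_2$ for \eqref{errorrep2}, and \eqref{B2} combined with the splitting $\sum_{j\ge n}\int_{t_{j-1}\vee t}^{t_j}$ for \eqref{errorrep3}. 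If anything you are more careful than the paper on one bookkeeping point: with the stated convention $E_{hk}z=z_{hk}-z$, Galerkin orthogonality gives $B(e,z)=B(e,z-z_{hk})=-B(e,E_{hk}z)$, so your explicit tracking of $z-z_{hk}=-E_{hk}z$ is internally consistent, whereas the paper's chain $B(e,z)=B(e,E_{hk}z)$ silently absorbs this sign.
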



\begin{proof}
Using the identity \eqref{BequalBstar} and the Galerkin
orthogonality \eqref{Galerkinorthogonality} we have,
\begin{equation}   \label{errorrep:eq0}
  \begin{split}
    L^*(e)&=B^*(e,z)=B(e,z)
      =B(e,E_{hk}z)
      =B(U,E_{hk}z)-B(u,E_{hk}z)\\
    &=B(U,E_{hk}z)-L(E_{hk}z)=R(U;E_{hk}z),
  \end{split}
\end{equation}
where $R(U;\cdot)$ is the residual of the Galerkin
approximation $U$ as a functional on the solution space $\cU^*$. 
Then, by the definition of $B$ and $L$ in \eqref{BL}, we have
\begin{equation}   \label{errorrep:eq1}
  \begin{split}
    L^*(e)&=
    \big(U_1(0)-u^0,E_{hk}z_1(0)\big)
    +\big(U_2(0)-v^0,E_{hk}z_2(0)\big)\\
    &\quad+\int_0^T\! \Big\{
     (\dot U_1,E_{hk}z_1)-(U_2,E_{hk}z_1)
      +(\dot{U}_2,E_{hk}z_2)+a(U_1,E_{hk}z_2)\\
    &\quad\quad-\int_0^t\!
      \cK(t-s)a\big(U_1(s),E_{hk}z_2(t)\big)\,ds\Big\}\,dt\\
    &\quad-\int_0^T\!
      \Big\{(f,E_{hk}z_2)+(g,E_{hk}z_2)_{\Gamma_\tN}\Big\}\,dt.
  \end{split}
\end{equation}
Now, by partial integration with respect to the space variable and 
recalling $r_d$ from \eqref{rh}, we obtain
\begin{equation}   \label{errorrep:intbyparts1}
  \begin{split}
     &\int_0^T\! a(U_1,E_{hk}z_2)\,dt\\
     &\quad =\sum_{n=1}^N\int_{I_n}
      \sum_{K\in\bar\cT_h^n}a(U_1,E_{hk}z_2)_{K}\,dt
    =\sum_{n=1}^N\int_{I_n}
      \sum_{K\in\bar\cT_h^n}
      \big(\sigma_0(U_1)\cdot n,E_{hk}z_2\big)_{\partial K}\,dt\\
    &\quad=\sum_{n=1}^N\int_{I_n}\Big\{
      \sum_{E\in \cE_I^n}
      \big(-[\sigma_0(U_1)\cdot n],E_{hk}z_2\big)_E
       +\sum_{E\in \cE_{\Gamma_\tN}^n}
        \big(\sigma_0(U_1)\cdot n,E_{hk}z_2\big)_E\Big\}\,dt\\
    &\quad=\sum_{n=1}^N \int_{I_n}\sum_{K\in \bar\cT_h^n}
      \Big\{
      (r_d,E_{hk}z_2)_{\partial K}
      +\big(\sigma_0(U_1)\cdot n,E_{hk}z_2\big)_{\partial K \cap \Gamma_\tN}
      \Big\}\,dt,
  \end{split}
\end{equation}
where $\cE_I^n,\,\cE_{\Gamma_\tN}^n$ are, respectively, the sets of the
interior edges and the edges on the Neumann boundary, corresponding to the triangulation $\bar\cT_h^n$.

For the convolution term in \eqref{errorrep:eq1}, 
similar to \eqref{errorrep:intbyparts1}, we have
\begin{equation*}   
  \begin{split}
    \int_0^T\int_0^t\! &\cK(t-s)a(U_1(s),E_{hk}z_2(t))\ ds\ dt\\
    &=\sum_{n=1}^N\int_{I_n} \sum_{j=1}^n\int_{t_{j-1}}^{t_j\wedge t}
       \cK(t-s)\sum_{K\in\bar\cT_h^j} a\big(U_1(s),E_{hk}z_2(t)\big)_K\ ds\  dt\\
    &=\sum_{n=1}^N\int_{I_n} \sum_{j=1}^n
       \int_{t_{j-1}}^{t_j\wedge t}\cK(t-s) 
      \bigg\{ \sum_{E\in \cE_I^j}
      \Big(-[\sigma_0(U_1)\cdot n],E_{hk}z_2(t) 
      \Big)_E\\
    &\quad+\sum_{E\in \cE_{\Gamma_\tN}^j}
      \Big(\sigma_0(U_1)\cdot n,E_{hk}z_2(t) 
      \Big)_E\bigg\}\ ds\\
    &=\sum_{n=1}^N\int_{I_n} \sum_{j=1}^n
       \int_{t_{j-1}}^{t_j\wedge t}\cK(t-s) \sum_{K\in\bar\cT_h^j}
       \Big\{
       \Big(r_d(s),E_{hk}z_2(t)\Big)_{\partial K}\\
    &\quad  + \Big(\sigma_0(U_1(s))\cdot n,E_{hk}z_2(t) 
      \Big)_{\partial K \cap \Gamma_{\tN}}
       \Big\}\ ds\ dt,
  \end{split}
\end{equation*}
that implies
\begin{equation*}   
  \begin{split}
    \int_0^T\int_0^t\! &\cK(t-s)a(U_1(s),E_{hk}z_2(t))\ ds\ dt\\
    &=\sum_{n=1}^N\int_{I_n} \sum_{j=1}^n
       \sum_{K\in\bar\cT_h^j}
       \Big\{
       \Big(\int_{t_{j-1}}^{t_j\wedge t}\cK(t-s) r_d(s)\ ds,
        E_{hk}z_2(t)\Big)_{\partial K}\\
    &\quad  
      +\Big(\int_{t_{j-1}}^{t_j\wedge t}\cK(t-s) \sigma_0(U_1(s))\cdot n\ ds,
        E_{hk}z_2(t) 
      \Big)_{\partial K \cap \Gamma_{\tN}}
       \Big\}\ dt\\
    &=\sum_{n=1}^N \sum_{j=1}^n\sum_{K\in\bar\cT_h^j}
       \int_{I_n}\Big(\int_{t_{j-1}}^{t_j\wedge t}\cK(t-s)r_d(s)\ ds,
         E_{hk}z_2(t)\Big)_{\partial K}dt\\
    &\quad +\sum_{n=1}^N\int_{I_n}
       \Big(\int_0^t \cK(t-s) \sigma_0(U_1(s))\cdot n\ ds,E_{hk}z_2(t) 
      \Big)_{\Gamma_{\tN}}\ dt\\
    &=\sum_{n=1}^N \sum_{j=1}^n\sum_{K\in\bar\cT_h^j}
       \int_{I_n}\Big(\int_{t_{j-1}}^{t_j\wedge t}\cK(t-s)r_d(s)\ ds,
         E_{hk}z_2(t)\Big)_{\partial K}dt\\
    &\quad +\sum_{n=1}^N\int_{I_n}\sum_{K\in\bar\cT_h^n}
       \Big(\int_0^t \cK(t-s) \sigma_0(U_1(s))\cdot n\ ds,E_{hk}z_2(t) 
      \Big)_{\partial K \cap \Gamma_{\tN}}\ dt.
  \end{split}
\end{equation*}
This, together with \eqref{errorrep:intbyparts1} and 
recalling $g_d$ from \eqref{gh}, imply
\begin{equation}   \label{errorrep:eq2}
  \begin{split}
    \int_0^T\! 
    &\Big\{a(U_1(s),E_{hk}z_2(t)) 
       -\int_0^t\! \cK(t-s)a(U_1(s),E_{hk}z_2(t))\ ds
       \Big\}\ dt\\
    &=\sum_{n=1}^N \sum_{K\in\bar\cT_h^n}
      \Big\{(r_d,E_{hk}z_2)_{\partial K^n} 
        + (g_d,E_{hk}z_2)_{\partial K^n}\Big\}\\
    &\quad -\sum_{n=1}^N \sum_{j=1}^n\sum_{K\in\bar\cT_h^j}
       \int_{I_n}\Big(\int_{t_{j-1}}^{t_j\wedge t}\cK(t-s)r_d(s)\ ds,
         E_{hk}z_2\Big)_{\partial K}dt.
  \end{split}
\end{equation}

Hence, from \eqref{errorrep:eq2} and space-time cellwise
representation of the other terms in \eqref{errorrep:eq1}, 
we conclude the first  a posteriori error representation
\eqref{errorrep1}.

Now we prove the second error representation \eqref{errorrep2}. 
To this end, in \eqref{errorrep:eq0} we use \eqref{B2} 
the second variant of the bilinear form $B$, in which the convolution 
term has been  rearanged. Therefore we just need to study the rearanged 
convolution term that, similar to \eqref{errorrep:intbyparts1}, is writen as  
\begin{equation}   \label{errorrep:intbyparts2}
  \begin{split}
    \int_0^T\int_t^T&\!\cK(s-t)a(U_1(t),E_{hk}z_2(s))\ ds\ dt\\
    &=\sum_{n=1}^N\int_{I_n} \sum_{K\in\bar\cT_h^n}
       a\Big(U_1(t),\int_t^T\! \cK(s-t)E_{hk}z_2(s)\ ds\Big)_{K}\ dt\\
    &=\sum_{n=1}^N \int_{I_n}  \sum_{K\in \bar\cT_h^n}
      \Big(r_d,\int_t^T\! \cK(s-t)E_{hk}z_2(s)\ ds
      \Big)_{\partial K}\ dt\\
    &\quad +\sum_{n=1}^N\int_{I_n}
        \Big(\sigma_0(U_1)\cdot n,\int_t^T\! \cK(s-t)E_{hk}z_2(s)\ ds
        \Big)_{\Gamma_\tN}\ dt.
  \end{split}
\end{equation}
For the second term of the right side, we exchange the role 
of the variables $s,t$, and we change the order of the time integrals to obtain 
\begin{equation}   \label{errorrep:intbyparts2_2}
  \begin{split}
    \sum_{n=1}^N\int_{I_n}
     &\Big(\sigma_0(U_1)\cdot n,\int_t^T\! \cK(s-t)E_{hk}z_2(s)\ ds
        \Big)_{\Gamma_\tN}\ dt\\
     &=\int_0^T\int_s^T\! 
        \cK(t-s) \big(\sigma_0(U_1(s))\cdot n,
          E_{hk}z_2(t)\big)_{\Gamma_\tN}\ dt\ ds\\
     &=\sum_{n=1}^N\int_{I_n}  
        \Big(\int_0^t\! \cK(t-s) \sigma_0(U_1(s))\cdot n \ ds,E_{hk}z_2(t)
        \Big)_{\Gamma_\tN}\ dt,
  \end{split}
\end{equation}
that, with \eqref{errorrep:intbyparts2}, we have
\begin{equation*}  
  \begin{split}
    \int_0^T\int_t^T\! &\cK(s-t)a(U_1(t),E_{hk}z_2(s))\ ds\ dt\\
    &=\sum_{n=1}^N \int_{I_n}\sum_{K\in \bar\cT_h^n}
      \Big\{\Big(r_d,\int_t^T\! \cK(s-t)E_{hk}z_2(s)\ ds
      \Big)_{\partial K}\\
    &\quad +
        \Big(\int_0^t\! \cK(t-s)\sigma_0(U_1(s))\cdot n\ ds,E_{hk}z_2(t)
        \Big)_{\partial K \cap \Gamma_\tN}\Big\}\ dt.
  \end{split}
\end{equation*}
This, together with \eqref{errorrep:intbyparts1}, imply
\begin{equation}   \label{errorrep:eq3}
  \begin{split}
    \int_0^T\! 
    \Big\{a(U_1(s),&E_{hk}z_2(t)) 
       -\int_t^T\! \!\cK(s-t)a(U_1(t),E_{hk}z_2(s))\ ds
       \Big\}\ dt\\
    &=\sum_{n=1}^N \sum_{K\in\bar\cT_h^n}\!
      \bigg\{\!(r_d,E_{hk}z_2)_{\partial K^n} 
        + (g_d,E_{hk}z_2)_{\partial K^n}\\
    &\quad  
      -\Big(r_d,\int_t^T\!\! \cK(s-t)E_{hk}z_2(s)\ ds\Big)_{\partial K^n}
      \bigg\},
  \end{split}
\end{equation}
that is the replacement for \eqref{errorrep:eq2}. 

Hence, from \eqref{errorrep:eq3} and space-time cellwise
representation of the other terms in \eqref{errorrep:eq1}, 
we conclude the second error representation
\eqref{errorrep2}. 

Finally, we prove the third error representation \eqref{errorrep3}.  
In \eqref{errorrep:eq0} we use \eqref{B2}, the second variant of 
the bilinear form $B$, and similar to \eqref{errorrep:intbyparts1} we have
\begin{equation}   \label{errorrep:intbyparts3}
  \begin{split}
    \int_0^T\int_t^T\! &\cK(s-t)a(U_1(t),E_{hk}z_2(s))\ ds\ dt\\
    &=\sum_{n=1}^N\int_{I_n}\sum_{j=n}^N\int_{t_{j-1}\vee t}^{t_j}
       \cK(s-t)a(U_1(t),E_{hk}z_2(s))\ ds \ dt\\
    &=\sum_{n=1}^N\int_{I_n}\sum_{j=n}^N\int_{t_{j-1}\vee t}^{t_j}
       \cK(s-t)\sum_{K\in \bar\cT_h^j}
       \Big\{
       \Big(r_d(t),E_{hk}z_2(s) \Big)_{\partial K}\\
    &\qquad\qquad\qquad \qquad
       + \Big(\sigma_0(U_1(t))\cdot n,
       E_{hk}z_2(s)\Big)_{\partial K  \cap \Gamma_\tN}
       \Big\}\ ds \ dt.
  \end{split}
\end{equation}
Then, rewriting the second term in the brace 
similar to \eqref{errorrep:intbyparts2_2}, we have
\begin{equation*}   
  \begin{split}
      \int_0^T\int_t^T\! &\cK(s-t)a(U_1(t),E_{hk}z_2(s))\ ds\ dt\\
    &=\sum_{n=1}^N\sum_{j=n}^N\sum_{K\in \bar\cT_h^j}
       \int_{I_n}\int_{t_{j-1}\vee t}^{t_j}
       \cK(s-t)
       \big(r_d(t),E_{hk}z_2(s)\big)_{\partial K}\ ds\ dt\\
    &\qquad 
       +\sum_{n=1}^N\int_{I_n}\sum_{K\in \bar\cT_h^n}
        \Big(\int_0^t \cK(t-s)\sigma_0(U_1(s))\cdot n\ ds,
       E_{hk}z_2(t)\Big)_{\partial K \cap \Gamma_\tN} \ dt
  \end{split}
\end{equation*}
This, together with \eqref{errorrep:intbyparts1}, imply
\begin{equation}   \label{errorrep:eq4}
  \begin{split}
    \int_0^T\! 
    &\Big\{a(U_1(s),E_{hk}z_2(t)) 
       -\int_t^T\! \cK(s-t)a(U_1(t),E_{hk}z_2(s))\ ds
       \Big\}\ dt\\
    &=\sum_{n=1}^N \sum_{K\in\bar\cT_h^n}
      \Big\{(r_d,E_{hk}z_2)_{\partial K^n} 
       + (g_d,E_{hk}z_2)_{\partial K^n}\Big\}\\
    &\qquad 
     -\sum_{n=1}^N\sum_{j=n}^N\sum_{K\in \bar\cT_h^j}
       \int_{I_n}\int_{t_{j-1}\vee t}^{t_j}
       \Big(r_d(t),\cK(s-t)E_{hk}z_2(s)\Big)_{\partial K}\ ds\ dt,
  \end{split}
\end{equation}
that is the replacement for \eqref{errorrep:eq2}. 

Hence, from \eqref{errorrep:eq4} and space-time cellwise
representation of the other terms in \eqref{errorrep:eq1}, 
we conclude the third error representation
\eqref{errorrep3}. 
Now the proof is complete. 
\end{proof}

We note that all error indicators in \eqref{thetas} depend on  
the unknown dual solution $z$, which is not available. 
For strategies to evaluate the error indicators, 
that are based on suitable approximation of the dual solution, 
we refer to  \cite{BangerthGeigerRannacher} 
and \cite{BangerthRannacher:Book}. 
When the spatial meshes change from a time 
level to the next one, evaluation of functions is carried out by means 
of interpolation/extrapolation. We recall that one hanging node is allowed 
per edge or face. 

We also note that the difference between a posteriori error representations 
 \eqref{errorrep1}--\eqref{errorrep3} are the error indicators 
 $\Theta_{5,K}^{n,j}$, $\Theta_{5,K}^n$, and $\Theta_{5,K}^{N,j}$, 
 where we apply the convolution integral either on the residual 
 $r_d$ or on the error term $E_{hk}z_2$. 
 Obviously we choose the cheaper one, depending on the method 
 we choose for computing or estimating $E_{hk}z_2$. 
 It should be noted that the first and the third error representations 
 \eqref{errorrep1} and \eqref{errorrep3} 
 are not space-time cellwise.

\section{{\bf A posteriori error estimates based on global projections}}
In order to evaluate the a posteriori error representations 
\eqref{errorrep1}--\eqref{errorrep3}, 
we need information about the continuous dual solution $z$.
Such information has to be obtained either through a priori
analysis in form of bounds for $z$ in certain Sobolev norms or
through computation by solving the dual problem numerically.
In this context we provide information through a priori analysis
and we leave the investigation on the second case to a later work.

Sometimes the target functional one is interested in is (almost) 
global. Then, instead of numerically approximating the dual solution, 
one can get cheaper error indicators by using analytical a priori estimates for 
the dual solution. We first present a weighted global a posteriori error estimate,
using global $L_2$-projections $\cP_k,\cP_h$ defined in
\eqref{projections}, and error estimates of $\cP_h$ in a
weighted $L_2$-norm.

We recall the weighted global error estimates of the
$L_2$-projection $\cP_h$ \eqref{projections}, see \cite{Boman}.
First we recall some notation.
Let $\cT$ be a given triangulation with mesh function $h$,
and for any simplex $K\in\cT$,
 $\rho_K$ denote the radius of the largest ball contained in
the closure of $K$, that is $\bar K$.
A family $\cF $ of triangulations $\cT$ is called non-degenerate,
if there exsists a constant $c_0$ such that
\begin{equation*}
  c_0=\max_{\cT\in\cF }\max_{K\in\cT} \frac{h_K}{\rho_K}.
\end{equation*}
Let $S_K=\{K'\in\cT : \bar{K'}\cap\bar{K}\neq \varnothing\}$. 
Also $\delta_{\cT}$ and $\delta_{\cF}$ be measures for a given 
triangulation $\cT$ and a given family $\cF$, respectively, defined by
\begin{equation*}   
  \delta_{\cT}=\max_{K\in\cT}\max_{K'\in S_K}
   |1- h_{K'}^2/h_K^2|, \qquad 
   \delta_{\cF }=\max_{\cT\in\cF }\delta_{\cT}.
\end{equation*}

We define the error operators $E_{hk},\,E_h$, and $E_k$ by
\begin{equation*}   
  E_{hk}v=(\cP_k\cP_h-I)v,\quad
  E_hv=(\cP_h-I)v,\quad
  E_kv=(\cP_k-I)v,
\end{equation*}
and we note that
\begin{equation}   \label{erroroperators:rel}
  E_{hk}=E_h+E_k\cP_h.
\end{equation}

We recall standard error estimation for the time projection error 
$E_k$, that is, 
\begin{equation}   \label{errorest-Pk}
  \int_{I_n}|E_k v|\ dt=\int_{I_n}|E_{k,n} v|\ dt 
  \leq C k_n^\gamma \int_{I_n}|\partial_t^\gamma v|\ dt, \quad \gamma=0,1.
\end{equation}
We also quote the error estimates for the spatial projection error 
$E_h$ from \cite{Boman}, which is stated in the following lemma.
\begin{lemma}
Assume that the family $\cF $ of traingulations $\cT$ be  non-degenerate.
Then for sufficiently small $\delta_{\cF }$, there
exists a constant $C$ such that for any triangulation
$\cT\in\cF $ we have, 
\begin{align}  \label{Lemma1:est1}
  \|h^{-s}E_hv\|&\le C\|\nabla^s v\|,
    \quad s=0,1, 2,\ \forall v\in H^s,\\  \label{Lemma1:est2}
  \|h^{-s}\nabla E_hv\|
     &\le C\|\nabla^{s+1} v\|,\quad s=0, 1,\ \forall v\in H^s,
\end{align}
where `$\nabla$' denotes the usual gradient.
\end{lemma}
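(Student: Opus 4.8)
The plan is to reduce both estimates to two ingredients: the weighted $L_2$-stability of the global projection $\cP_h$,
\begin{equation*}
  \|h^{-s}\cP_h w\|\le C\|h^{-s}w\|,\qquad s=0,1,2,
\end{equation*}
and the standard local approximation properties of a local quasi-interpolant $\pi_h$ of Cl\'ement/Scott--Zhang type, namely
\begin{equation*}
  \|h^{-s}(v-\pi_h v)\|\le C\|\nabla^s v\|,\qquad
  \|h^{-s}\nabla(v-\pi_h v)\|\le C\|\nabla^{s+1}v\|.
\end{equation*}
The case $s=0$ of \eqref{Lemma1:est1} is immediate, since $I-\cP_h$ is an orthogonal projection and hence a contraction on $L_2=H$.

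Granting the weighted stability, the key observation is that, since $\cP_h$ fixes $V_h^n$ and $\pi_h v$ lies in the finite element space, one has the identity
\begin{equation*}
  E_h v=(\cP_h-I)v=(\cP_h-I)(v-\pi_h v).
\end{equation*}
For \eqref{Lemma1:est1} I would then apply the triangle inequality, bound $\|h^{-s}\cP_h(v-\pi_h v)\|$ by weighted stability against $\|h^{-s}(v-\pi_h v)\|$, and finish with the approximation estimate for $\pi_h$. For \eqref{Lemma1:est2} I would write $\nabla E_h v=\nabla\cP_h(v-\pi_h v)-\nabla(v-\pi_h v)$, control the second term directly by the gradient approximation estimate, and treat the first term with an elementwise inverse inequality $\|\nabla\chi\|_K\le Ch_K^{-1}\|\chi\|_K$ followed by the weighted stability one level higher; this is why the proof of \eqref{Lemma1:est2} for $s=1$ invokes the weighted stability at level $s=2$, matching the range stated in \eqref{Lemma1:est1}.

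The main obstacle is the weighted stability of the $L_2$-projection itself, which is exactly the content imported from \cite{Boman}. The mechanism is to test the defining relation \eqref{projections} of $\cP_h$ against a discrete function built from $h^{-2s}\cP_h w$; since the weight $h^{-2s}$ does not preserve the finite element space, one must commute it past the projection, and the resulting commutator is precisely what the mesh-variation measure $\delta_{\cF}$ controls. It is here that the hypothesis of \emph{sufficiently small} $\delta_{\cF}$ enters: smallness of $\delta_{\cF}$ guarantees that $h^{-2s}$ is comparable on neighbouring simplices $K,K'\in S_K$, so that the commutator can be absorbed into the left-hand side, yielding a stability constant $C$ uniform over the non-degenerate family $\cF$. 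Non-degeneracy, i.e.\ the uniform control of $h_K/\rho_K$, supplies the underlying shape-regularity needed for both the inverse inequality and the quasi-interpolation estimates used above.
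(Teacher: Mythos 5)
The paper offers no proof of this lemma at all: it is quoted verbatim from \cite{Boman} (``We also quote the error estimates for the spatial projection error $E_h$ from \cite{Boman}\dots''), so there is no in-paper argument to compare against. Your reduction is nevertheless correct and mirrors the standard route taken in that reference and in the Crouzeix--Thom\'ee line of work it builds on: the identity $E_hv=(\cP_h-I)(v-\pi_h v)$ is valid because $\cP_h$ fixes the finite element space; the case $s=0$ of \eqref{Lemma1:est1} is indeed immediate from orthogonality; and the passage to \eqref{Lemma1:est2} via an elementwise inverse inequality on $\cP_h(v-\pi_h v)$ followed by weighted stability one level higher is sound, and correctly explains why $s$ runs to $2$ in the first estimate but only to $1$ in the second. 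The one genuinely hard ingredient --- the weighted stability $\|h^{-s}\cP_h w\|\le C\|h^{-s}w\|$ --- you do not prove but correctly isolate as the crux and attribute to \cite{Boman}; since the paper itself imports the entire lemma from that source, your proposal is at exactly the paper's own level of rigor, with the added value of making the reduction explicit. Two small precisions are worth recording. First, for conforming non-degenerate simplicial meshes, comparability of $h_K$ and $h_{K'}$ for neighbouring elements already follows from shape regularity; what smallness of $\delta_{\cF}$ buys in the absorption argument is not comparability but that $h_{K'}^2/h_K^2$ is close to $1$, so the commutator term enters with a small coefficient and can be absorbed into the left-hand side. Second, the weighted Cl\'ement/Scott--Zhang estimates you invoke also move the weight $h^{-s}$ between an element and its patch $S_K$, so they too rely on this local slow variation (or at least on the local quasi-uniformity furnished by non-degeneracy); stating this keeps the hypotheses of the lemma fully accounted for.
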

\noindent
For more details on the practical aspects of $\delta_{\cF }$,
 see \cite{Boman}.

For the next theorem we recall the mesh functions $h_n,\,\bar h_n$
from \eqref{hn}, \eqref{barhn}, and we define the notations
\begin{equation*}
  \begin{aligned}
    &\bar h_{min,n}=\min_{K\in \bar\cT_h^n}\bar h_{K}, 
      &&\bar h_{max,n}=\max_{K\in \bar \cT_h^n}\bar h_{K},\\
    &\cK_{n,T}=\Big( \int_t^T\! \cK(s-t)\ ds \Big)^{1/2}, 
      &&\cK_{n,j}=\Big( \int_{t\vee t_{j-1}}^{t_j}\! \cK(s-t)\ ds \Big)^{1/2}.
  \end{aligned}
\end{equation*}
We also use the scaled trace inequality, for any simplex $K \in \cT_h$,
\begin{equation} \label{scaledtraceineq}
  \|v\|_{\partial K}\leq C(h_K^{-1/2} \|v\|_K 
    + h_K^{1/2} \|\nabla v\|_K).
\end{equation}

\begin{theorem}
Let $u$ be the solution of \eqref{weakformprimary}, and $U$ be the
solution of \eqref{cG} with a non-degenerate family $\cF_h$
of triangulations $\cT_h^n,\,n=0,1,\dots,N$, with sufficiently
small $\delta_{\cF_h}$, such that the weighted global error
estimates \eqref{Lemma1:est1} and \eqref{Lemma1:est2} hold. 
We also assume that $\cK \in L_1(\mathbb{R}^+)$. 
Then, denoting the error $e=U-u$, 
we have the weighted a posteriori error estimate, 
for $\alpha=0,1,2$, $\beta=1,2$, $\gamma=0,1$, 
\begin{equation}   \label{aposteriori:estimate}
  \begin{split}
    |L^*(e)|
    &\le C \max_{0 \leq t\leq T}
     \Big\{\|\nabla^\alpha z_1(t)\| , \|\nabla^\beta z_2(t)\| 
      , \| \partial^{\alpha \wedge 1}_t z_1(t)\| 
      ,\| \partial^\gamma_t z_2(t)\|  
     \Big\} \\
     &\quad \times \Big\{
     \Upsilon_0 + \sum_{n=1}^N \int_{I_n} 
     \Big(\Upsilon_{h}+\Upsilon_{h,\partial K}
       +\Upsilon_k + \Upsilon_{k,\partial K}     
     \Big) \ dt \Big\},
  \end{split}
\end{equation}
where
\begin{equation*}   
  \begin{split}
    &\Upsilon_0 
     =\| h_0^\alpha \big(U_1(0)-u^0\big)\|
       +\| h_0^\beta\big(U_2(0)-v^0\big)\|,\\
    &\Upsilon_{h}
     =\| \bar h_n^\alpha(\dot U_1-U_2)\|+\| \bar h_n^\beta(\dot U_2-f)\|,\\
    &\Upsilon_{h,\partial K}
     =\big(\zeta_{n}(\beta)+\zeta_{n,N}(\beta)\big)
    \Big(\sum_{K\in\bar\cT_h^n}\bar h_{K}^3
       \|r_d\|_{\partial K}^2\Big)^{1/2}\\
    &\qquad\qquad
     +\zeta_{n}(\beta)
      \Big(\sum_{K\in\bar\cT_h^n}\bar h_{K}^3
      \|g_d-g\|_{\partial K}^2\Big)^{1/2},\\
    &\Upsilon_{k}
     =k_n^{\alpha \wedge 1}\|E_k(\dot U_1-U_2)\| 
     +k_n^\gamma \Big(\|E_k\bar A_hU_1\|\\
    &\qquad\qquad 
     +\big\|E_k\int_0^t\!\!
      \cK(t-s)\bar A_hU_1(s)\,ds\big\| +\|E_kf\| \Big),\\
    &\Upsilon_{k,\partial K}
     =k_n^\gamma\Big(\sum_{K\in \bar \cT_h^n}
        h_K^{-1}\|E_kg\|_{\partial K}^2\Big)^{1/2},
  \end{split}
\end{equation*}
with 
$\ \zeta_n(\beta)=\bar h_{min,n}^{\beta-2}, \ 
\zeta_{n,N}(\beta)=\bar h_{min,n}^{-3/2}  \cK_{n,T}
\sum_{j=n}^N \cK_{n,j} \bar h_{max,j}^{\beta-\frac{1}{2}}.$
\end{theorem}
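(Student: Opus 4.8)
The plan is to start from the space–time cellwise error representation \eqref{errorrep2}, substitute the splitting $E_{hk}=E_h+E_k\cP_h$ of \eqref{erroroperators:rel} into every indicator $\Theta$, and then estimate the spatial part $E_hz$ by the weighted projection bounds \eqref{Lemma1:est1}--\eqref{Lemma1:est2} of the preceding lemma and the temporal part $E_k\cP_hz$ by the time–projection estimate \eqref{errorest-Pk}. The guiding principle is that in each term the mesh weights $h^\alpha,h^\beta$ (or $k_n^\gamma$) are transferred onto the residuals, while the dual solution is left only inside the Sobolev norms $\|\nabla^\alpha z_1\|$, $\|\nabla^\beta z_2\|$, $\|\partial_t^{\alpha\wedge1}z_1\|$, $\|\partial_t^\gamma z_2\|$; these are then bounded by their supremum over $[0,T]$ and pulled out as the common prefactor, leaving the computable weighted residuals $\Upsilon_0,\Upsilon_h,\Upsilon_{h,\partial K},\Upsilon_k,\Upsilon_{k,\partial K}$.

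For the volume indicators $\Theta_{0,K}$, $\Theta_{1,K}^n$, $\Theta_{2,K}^n$ I would treat the two halves of $E_{hk}$ separately. For the $E_h$ half I apply Cauchy--Schwarz on $K$ or $K^n$, insert $1=h^{s}h^{-s}$, and use $\|h^{-s}E_hw\|\le C\|\nabla^sw\|$ with $s=\alpha$ against $z_1$ and $s=\beta$ against $z_2$; a discrete Cauchy--Schwarz over $K$ then produces $\Upsilon_0$ and the first two terms of $\Upsilon_h$. For the $E_k$ half I use that over a single slab $I_n$ the projections $\cP_{h,n}$ and $\cP_{k,n}$ commute and are self-adjoint, so $\int_{I_n}(\rho,E_k\cP_hz)\,dt=\int_{I_n}(E_k\rho,\cP_hz)\,dt$; since $\int_{I_n}E_k\rho\,dt=0$, one may subtract the time average of $\cP_hz$ and invoke \eqref{errorest-Pk}, which yields the factor $k_n^\gamma$ paired against $\max\|\partial_t^\gamma z\|$ while leaving the residual as $\|E_k\rho\|$. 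This is precisely where the terms $E_k(\dot U_1-U_2)$, $E_k\bar A_hU_1$, $E_k(\cK*\bar A_hU_1)$, $E_kf$ of $\Upsilon_k$ arise: for the temporal error the elastic and memory terms are kept in volume form via $a(U_1,E_k\cP_hz_2)=(\bar A_hU_1,E_k\cP_hz_2)$, using the discrete operator $\bar A_h$ of $\S3$, rather than integrated by parts.

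The boundary indicators $\Theta_{3,K}^n$, $\Theta_{4,K}^n$ and the Neumann part of the load are handled by Cauchy--Schwarz on $\partial K$ followed by the scaled trace inequality \eqref{scaledtraceineq} applied to $E_hz_2$, and then again by \eqref{Lemma1:est1}--\eqref{Lemma1:est2}. Balancing $\|E_hz_2\|_{\partial K}\le C(\bar h_K^{-1/2}\|E_hz_2\|_K+\bar h_K^{1/2}\|\nabla E_hz_2\|_K)$ against a factor $\bar h_K^{3/2}$ extracted from $r_d$, and using $\max_K\bar h_K^{\beta-2}=\bar h_{min,n}^{\beta-2}$ for $\beta\le2$, gives the weight $\zeta_n(\beta)=\bar h_{min,n}^{\beta-2}$ multiplying $(\sum_K\bar h_K^3\|r_d\|_{\partial K}^2)^{1/2}$; the same computation with $g_d-g$ gives the second part of $\Upsilon_{h,\partial K}$, while the $E_k$ half of the Neumann term, estimated through a trace inequality in the form $\sum_K h_K^{-1}\|E_kg\|_{\partial K}^2$, produces $\Upsilon_{k,\partial K}$.

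The main obstacle is the memory (convolution) boundary indicator $\Theta_{5}$. Here the residual $r_d(t)$ on the slab $I_n$ is paired, through the kernel, with $E_hz_2(s)$ on a later slab $I_j$ carrying a different spatial mesh $\bar\cT_h^j$, so both mesh functions $\bar h_{min,n}$ and $\bar h_{max,j}$ must enter. The plan is to split the memory integral as $\int_t^T=\sum_{j\ge n}\int_{t\vee t_{j-1}}^{t_j}$, apply a Cauchy--Schwarz in time with $\cK$ as weight on each piece (this is where $\cK_{n,T}$ and $\cK_{n,j}$ appear, using $\cK\in L_1(\IR^+)$), and then bound $\|E_hz_2(s)\|_{\partial K}$ by the trace inequality and the projection lemma on the slab-$j$ mesh, giving the power $\bar h_{max,j}^{\beta-1/2}$. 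Transferring the $r_d$-weight back to the slab-$n$ cells introduces the factor $\bar h_{min,n}^{-3/2}$, and collecting everything yields $\zeta_{n,N}(\beta)=\bar h_{min,n}^{-3/2}\cK_{n,T}\sum_{j=n}^N\cK_{n,j}\bar h_{max,j}^{\beta-1/2}$. Making this bookkeeping of the two meshes and the double time integration consistent, while keeping the kernel integrals in the clean $\cK_{n,T},\cK_{n,j}$ form, is the delicate step; once it is carried out, summing all contributions and extracting the supremum of the dual norms gives \eqref{aposteriori:estimate}.
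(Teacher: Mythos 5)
Your proposal is essentially the paper's own proof: you split the residual through $E_{hk}=E_h+E_k\cP_h$ exactly as in \eqref{aposteriori:eq1}, estimate the spatial half $R(U;E_hz)$ via the cellwise representation \eqref{errorrep2}, the weighted projection bounds \eqref{Lemma1:est1}--\eqref{Lemma1:est2}, the scaled trace inequality, and the kernel-weighted Cauchy--Schwarz split over the slabs $j\ge n$ (which is precisely how $\zeta_n(\beta)$ and $\zeta_{n,N}(\beta)$ arise there), and you estimate the temporal half $R(U;E_k\cP_hz)$ in volume form with $\bar A_h$, the mean-zero property of $E_k$, \eqref{errorest-Pk} and $L_2$-stability of $\cP_h$. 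Your self-adjointness argument is a legitimate replacement for the paper's appeal to the discrete equations \eqref{cG2}; both reduce to the fact that quantities constant in time on $I_n$ integrate to zero against $E_k\cP_hz$, and your remark that the elastic and memory terms must be kept in volume form (not integrated by parts) for the temporal half is exactly the point that makes $\Upsilon_k$ come out as stated.

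The one concrete omission is the temporal half of the initial-data indicator. Under your splitting, $\Theta_{0,K}$ also produces $\big(U_1(0)-u^0,E_k\cP_hz_1(0)\big)_K+\big(U_2(0)-v^0,E_k\cP_hz_2(0)\big)_K$, which appears nowhere in \eqref{aposteriori:estimate}: $\Upsilon_0$ carries only the spatial weights $h_0^\alpha,h_0^\beta$, and these point values at $t=0$ cannot be absorbed through \eqref{errorest-Pk}, which controls time integrals rather than values at a single time. The paper disposes of these terms by taking the discrete initial data as in \eqref{initialdata}, $U_i(0)=\cP_hu_i(0)$, so that $U_i(0)-u_i(0)$ is $L_2$-orthogonal to the finite element space containing $E_k\cP_hz_i(0)$ and the pairing vanishes. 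Your plan needs this choice stated explicitly (it is an assumption the theorem silently inherits); with that addition, your outline carries through to \eqref{aposteriori:estimate}.
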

\begin{proof}
Let $z\in\cV^*$ be the solution of the dual problem
\eqref{weakformdual}.
From the definition of the $L_2$ projections $\cP_k,\cP_h$ in
\eqref{projections} and the test space $\cW_{hk}$ in \eqref{discretespaces} we have $\cP_k\cP_hz\in \cW_{hk}$.
Therefore, using \eqref{errorrep:eq0} and 
\eqref{erroroperators:rel} we have,
\begin{equation}   \label{aposteriori:eq1}
  L^*(e)=R(U;E_{hk}z)=R(U;E_h z)+R(U;E_k\cP_h z).
\end{equation}
We study the two terms at the right side of this equation.

For the first term, 
recalling \eqref{errorrep:eq0} and using the a posteriori error representation \eqref{errorrep2}, 
we have
\begin{equation}   \label{R:UEh}
  \begin{split}
   \tR(U;E_h z)&=L^*(e)\\
   &=\sum_{K\in\cT_h^0}\big\{
     \big(U_1(0)-u^0,E_hz_1(0)\big)_K
     +\big(U_2(0)-v^0,E_hz_2(0)\big)_K \big\}\\
   &\quad +\sum_{n=1}^N\int_{I_n}\!\sum_{K\in\bar\cT_h^n}
      \bigg\{
      \big(\dot U_1-U_2,E_hz_1\big)_K
      +(\dot U_2-f,E_hz_2)_K\\
   &\quad\quad   +(r_d,E_hz_2)_{\partial K}
     +(g_d-g,E_hz_2)_{\partial K}\\
    &\quad\quad
     -\Big(r_d,\int_t^T\!
         \cK(s-t)E_hz_2(s)\,ds\Big)_{\partial K}
        \bigg\} \  dt 
    =\sum_{i=1}^2 {\rm{\bf I}}_i 
     +\sum_{i=1}^5 {\rm{\bf II}}_i.
  \end{split}
\end{equation}
We then, for each term, use the Cauchy-Schwarz inequality twice.
First on the local elements $K$ and $\partial K$, to obtain local
$L_2$-norms, and then on the sum over the elements to obtain
global norms such that the weighted global error estimates 
\eqref{Lemma1:est1}--\eqref{Lemma1:est2} can be used.
For ${\rm{\bf I}}_1$, using \eqref{Lemma1:est1}, we have, 
for $\alpha=0,1,2$,
\begin{equation}   \label{I1}
  {\rm{\bf I}}_1\le
   \|h_0^\alpha(U_1(0)-u^0)\|\|h_0^{-\alpha} E_hz_1(0)\|
   \le C \|h_0^\alpha(U_1(0)-u^0)\| \|\nabla^\alpha z_1(0)\|,
\end{equation}
and in a similar way, for $\beta=1,2$,
\begin{equation}   \label{I2}
  \begin{split}
   {\rm{\bf I}}_2
    \le C \|h_0^\beta(U_2(0)-v^0)\| \|\nabla^\beta z_2(0)\|.
  \end{split}
\end{equation}
Now we estimate the second terms ${\rm{\bf II}}_i,\ i=1,\dots,5$. 
Using the Cauchy-Schwarz inequality and the error estimate
\eqref{Lemma1:est1}, we have, for $\alpha=0,1,2$,
\begin{equation}   \label{II1}
  \begin{split}
    {\rm{\bf II}}_1
    &=\sum_{n=1}^N\int_{I_n}\!\sum_{K\in\bar\cT_h^n}
     \big(\dot U_1-U_2,E_hz_1\big)_K \,dt\\
    &\le \sum_{n=1}^N\int_{I_n}\!
     \| \bar h_n^\alpha(\dot U_1-U_2)\| \| \bar h_n^{-\alpha}E_hz_1\|\,dt\\
    &\le C \max_{[0,T]}\|\nabla^\alpha z_1(t)\|
    \sum_{n=1}^N\int_{I_n}\!\| \bar h_n^\alpha(\dot U_1-U_2)\|\,dt,
  \end{split}
\end{equation}
and similarly we obtain, for $\beta=1,2$,
\begin{equation}   \label{II2}
  \begin{split}
    {\rm{\bf II}}_2
    &\le C \max_{[0,T]}\|\nabla^\beta z_2(t)\|
    \sum_{n=1}^N\int_{I_n}\!\| \bar h_n^\beta(\dot U_2-f)\|\,dt.
  \end{split}
\end{equation}
For ${\rm{\bf II}}_3$, we first have,
\begin{equation*}
  \begin{split}
    &{\rm{\bf II}}_3
     \le \sum_{n=1}^N\int_{I_n}\!
     \Big(\sum_{K\in\bar\cT_h^n}
       \bar h_K^3\|r_d(t)\|_{\partial K}^2\Big)^{1/2}
     \Big(\sum_{K\in\bar\cT_h^n}\bar h_K^{-3}
       \|E_hz_2(t)\|_{\partial K}^2\Big)^{1/2} \,dt.
  \end{split}
\end{equation*}
Then, by the scaled trace inequality \eqref{scaledtraceineq} 
and the weighted global error estimates 
\eqref{Lemma1:est1}--\eqref{Lemma1:est2},
we obtain, for $\beta=1,2$,
\begin{equation*}  
  \begin{split}
     \sum_{K\in\bar\cT_h^n}\bar h_K^{-3}
       \|E_hz_2\|_{\partial K}^2
     &\le C \sum_{K\in\bar\cT_h^n}\big\{
       \bar h_K^{-4}\|E_hz_2\|_K^2
       +\bar h_K^{-2}\|\nabla E_hz_2\|_K^2\big\}\\
     &\le C \sum_{K\in\bar\cT_h^n} 
       \bar h_K^{-4+2\beta} 
       \big\{ \| \bar h_K^{-\beta} E_hz_2\|_K^2 
       + \|\bar h_K^{-\beta+1} \nabla E_hz_2\|_K^2 \big\}\\
     &\leq C \bar h_{min,n}^{2(\beta-2)} \| \nabla^\beta z_2 \|^2.
  \end{split}
\end{equation*}
These imply the estimate, for $\beta=1,2$,
\begin{equation}   \label{II3}
  {\rm{\bf II}}_3
   \le C
     \max_{[0,T]}\|\nabla^\beta z_2(t)\|
     \sum_{n=1}^N\int_{I_n}\!\bar h_{min,n}^{\beta-2}
     \Big(\sum_{K\in\bar\cT_h^n}\bar h_K^3
       \|r_d\|_{\partial K}^2\Big)^{1/2}\,dt.
\end{equation}
In a similar way, we have, for $\beta=1,2$,
\begin{equation}   \label{II4}
  {\rm{\bf II}}_4
   \le C
     \max_{[0,T]}\|\nabla^\beta z_2(t)\|
     \sum_{n=1}^N\int_{I_n}\!\bar h_{min,n}^{\beta-2}
     \Big(\sum_{K\in\bar\cT_h^n}\bar h_K^3
       \|g_d-g\|_{\partial K}^2\Big)^{1/2}\,dt.
\end{equation}
Finally we study ${\rm{\bf II}}_5$. 
To this end, first we note that,
\begin{equation*} 
  \begin{split}
    {\rm{\bf II}}_5
     &\le \sum_{n=1}^N\int_{I_n}\!
     \Big(\sum_{K\in\bar\cT_h^n}
      \bar h_K^3\|r_d(t)\|_{\partial K}^2\Big)^{1/2}\\
     &\quad \times
      \Big(\sum_{K\in\bar\cT_h^n}\bar h_K^{-3}
       \Big\|\int_t^T\! \!\cK(s-t)E_hz_2(s)\,ds
       \Big\|_{\partial K}^2\Big)^{1/2} \,dt.
  \end{split}
\end{equation*}
Then, using the Cuachy-Schwarz inequality, 
we have
\begin{equation*}
  \begin{split}
   \sum_{K\in\bar\cT_h^n}\bar h_K^{-3}
       &\Big\|\int_t^T\!\cK(s-t)E_hz_2(s)\,ds
        \Big\|_{\partial K}^2\\
       &\le\sum_{K\in\bar\cT_h^n}\bar h_K^{-3}
        \Big(\int_t^T\!\!\cK(s-t)\|E_hz_2(s)\|_{\partial K}\,ds
        \Big)^2\\
       &\le \sum_{K\in\bar\cT_h^n}\bar h_K^{-3}
        \int_t^T\!\cK(s-t)\,ds
        \int_t^T\!\cK(s-t)\|E_hz_2(s)\|_{\partial K}^2\,ds\\
       &\le \cK_{n,T}^2
        \int_t^T\!\!\cK(s-t)\sum_{K\in\bar\cT_h^n}\bar h_K^{-3}
        \|E_hz_2(s)\|_{\partial K}^2\,ds,
  \end{split}
\end{equation*}
that using the scaled trace inequality  \eqref{scaledtraceineq}  
and \eqref{Lemma1:est1}--\eqref{Lemma1:est2}, we have, for $\beta=1,2$,
\begin{equation*}
  \begin{split}
   \sum_{K\in\bar\cT_h^n}  \bar h_K^{-3}
        \Big\|\int_t^T\! &\cK(s-t)E_hz_2(s)\,ds
        \Big\|_{\partial K}^2\\
       &\le C \bar h_{min,n}^{-3} \cK_{n,T}^2
        \sum_{j=n}^N\int_{t\vee t_{j-1}}^{t_j}\!
        \cK(s-t) \sum_{K\in\bar \cT_h^j}
        \big\{\bar h_K^{-1}
         \| E_hz_2(s)\|_K^2 \\
       &\qquad\qquad\qquad\qquad\qquad\qquad\qquad\qquad
        +\bar h_K \| \nabla E_hz_2(s)\|_K^2\big\}\,ds\\
       &\leq C \bar h_{min,n}^{-3} \cK_{n,T}^2
        \sum_{j=n}^N\int_{t\vee t_{j-1}}^{t_j}\!
        \cK(s-t) \sum_{K\in\bar \cT_h^j}
        \bar h_K^{-1+2\beta}
        \big\{ \| \bar h_K^{-\beta} E_hz_2(s)\|_K^2 \\
       &\qquad\qquad\qquad\qquad\qquad\qquad\qquad\qquad
        +\|  \bar h_K^{-\beta+1} \nabla E_hz_2(s)\|_K^2\big\}\,ds\\
       &\leq C  \bar h_{min,n}^{-3} \cK_{n,T}^2
        \sum_{j=n}^N \bar h_{max,j}^{2(\beta-\frac{1}{2})} \int_{t\vee t_{j-1}}^{t_j}\! 
        \cK(s-t) \| \nabla^\beta z_2(s) \|^2\ ds\\
       &\leq C  \bar h_{min,n}^{-3} \cK_{n,T}^2
        \max_{[t,T]} \| \nabla^\beta z_2(s) \|^2
        \sum_{j=n}^N \bar h_{max,j}^{2(\beta-\frac{1}{2})} \int_{t\vee t_{j-1}}^{t_j}\! 
        \cK(s-t)\ ds.
  \end{split}
\end{equation*}
Hence we have
\begin{equation}   \label{II5}
  \begin{split}
    {\rm{\bf II}}_5
     &\le C \max_{[0,T]}\|\nabla^\beta z_2(t)\|\\
     &\quad\times 
     \sum_{n=1}^N\int_{I_n}\!
     \Big\{      
       \bar h_{min,n}^{-3/2} \cK_{n,T}
       \sum_{j=n}^N \big( \bar h_{max,j}^{\beta-\frac{1}{2}} \cK_{n,j}\big)
       \Big(\sum_{K\in\bar\cT_h^n}\bar h_K^3
       \|r_d\|_{\partial K}^2\Big)^{1/2}
     \Big\}\,dt.
  \end{split}
\end{equation}
Putting \eqref{I1}--\eqref{II5} in \eqref{R:UEh} we conclude, 
for $\alpha=0,1,2$, $\beta=1,2$,
\begin{equation}   \label{R:UEh:estimate}
  \begin{split}
   \tR&(U;E_h z)
   \le C \max_{[0,T]}
    \big\{\|\nabla^\alpha z_1(t)\| , \|\nabla^\beta z_2(t)\| \big\}\\
   &\times\bigg\{
     \|h_0^\alpha \big(U_1(0)-u^0\big)\|
     +\|h_0^\beta \big(U_2(0)-v^0\big)\|\\
   &\quad\quad 
     +\sum_{n=1}^N\int_{I_n}\!\Big\{
     \| \bar h_n^\alpha (\dot U_1-U_2)\|+\| \bar h_n^\beta (\dot U_2-f)\|\\
   &\quad\quad
     +\bar h_{min,n}^{\beta-2}
     \Big(\sum_{K\in\bar\cT_h^n}\bar h_K^3
       \|r_d\|_{\partial K}^2\Big)^{1/2}
     +\bar h_{min,n}^{\beta-2}
     \Big(\sum_{K\in\bar\cT_h^n}\bar h_K^3
       \|g_d-g\|_{\partial K}^2\Big)^{1/2}\\
   &\quad\quad
      +\bar h_{min,n}^{-3/2} \cK_{n,T}
       \sum_{j=n}^N \big( \bar h_{max,j}^{\beta-\frac{1}{2}} \cK_{n,j}\big)
       \Big(\sum_{K\in\bar\cT_h^n}\bar h_K^3
       \|r_d\|_{\partial K}^2\Big)^{1/2}
   \Big\}\,dt\bigg\}.
  \end{split}
\end{equation}

Now we study the second term $R(U;E_kP_hz)$ in \eqref{aposteriori:eq1}. 
We have
\begin{equation*}
  \begin{split}
   \tR(U;E_k\cP_h z)
    &=\tR(U;E_k\cP_h z)\pm \int_0^T\!\big\{
     (\cP_kf,E_k\cP_hz_2)
     +(\cP_kg,E_k\cP_hz_2)_{\Gamma_{\tN}}\big\}\,dt\\
    &=\big(U_1(0)-u^0,E_k\cP_hz_1(0)\big)
     +\big(U_2(0)-v^0,E_k\cP_hz_2(0)\big)\\
    &\quad+\sum_{n=1}^N\int_{I_n}\!
      \Big\{\big(\dot U_1-U_2,E_k\cP_hz_1\big)+a(U_1,E_k\cP_hz_2)\\
    &\quad\quad
      -\int_0^t\!\cK(t-s)
      a\big(U_1(s),E_k\cP_hz_2\big)\,ds+\big(\dot U_2-\cP_kf,E_k\cP_hz_2\big)\\
    &\quad\quad
      -(\cP_kg,E_k\cP_hz_2)_{\Gamma_\tN}
      +(E_kf,E_k\cP_hz_2)
      +(E_kg,E_k\cP_hz_2)_{\Gamma_\tN}\Big\}\,dt.
  \end{split}
\end{equation*}
Recalling the initial data \eqref{initialdata}, 
$U_i(0)=\cP_hu_i(0),\,i=1,2$, 
the first two terms on the right side vanish. 
Besides, from the second equation of \eqref{cG2} we have,
for $V\in V_h^n$,
\begin{equation*}
  \begin{split}
    \int_{I_n}\!\big\{&(\dot U_2,V)
      -(\cP_kf,V)-(\cP_kg,V)_{\Gamma_\tN}\big\}\,dt\\
    &=-\int_{I_n}\!\Big\{a(\cP_kU_1,V)
      -a\Big(\cP_k\int_0^t\!\cK(t-s)U_1(s)\,ds,V\Big)
      \Big\}\,dt.
  \end{split}
\end{equation*}
Hence, we conclude
\begin{equation}   \label{R:UEk}
  \begin{split}
   \tR(U;E_k\cP_h z)
    &=\sum_{n=1}^N\int_{I_n}\!
      \Big\{\big(E_k(\dot U_1-U_2),E_k\cP_hz_1\big)
     -a(E_kU_1,E_k\cP_hz_2)\\
      &\qquad +a\Big(E_k\int_0^t\!
      \cK(t-s)U_1(s)\,ds,E_k\cP_hz_2\Big)\\
    &\qquad
      +(E_kf,E_k\cP_hz_2)+(E_kg,E_k\cP_hz_2)_{\Gamma_\tN}
      \Big\}\,dt.
  \end{split}
\end{equation}
For the last term, 
 we have
\begin{equation*}
  \begin{split}
   \sum_{n=1}^N\int_{I_n}\!
    &(E_kg,E_k\cP_hz_2)_{\Gamma_\tN}\,dt\\
    &\le \sum_{n=1}^N\int_{I_n}\!
     \Big(\sum_{K\in\bar\cT_h^n}
     \bar h_K^{-1}\|E_kg\|_{\partial K}^2\Big)^{1/2}
     \Big(\sum_{K\in\bar\cT_h^n}
     \bar h_K\|E_k\cP_hz_2\|_{\partial K \cap \Gamma_\tN}^2\Big)^{1/2}.
  \end{split}
\end{equation*}
By the scaled trace inequality \eqref{scaledtraceineq} 
and local inverse inequality,  
\begin{equation*}
 h_K\|\nabla \varphi\|_K\leq C\|\varphi\|_K,
  \quad \forall K \in \bar \cT_h^n,\ \forall \varphi \in \bar V_h^n,
\end{equation*}
we have
\begin{equation*}
  \begin{split}
    \sum_{K\in\bar\cT_h^n}
       \bar h_K\|E_k\cP_hz_2\|_{\partial K \cap \Gamma_\tN}^2
    &\le C\sum_{K\in\bar\cT_h^n}
      \big\{\|E_k\cP_hz_2\|_K^2
      + h_K^2\|\nabla E_k\cP_hz_2\|_K^2\big\}\\
    &\le C \sum_{K\in\bar\cT_h^n}
      \big\{\|E_k\cP_hz_2\|_K^2
      +\|E_k\cP_hz_2\|_K^2\big\}
    = C \|E_k\cP_hz_2\|^2.
  \end{split}
\end{equation*}
Hence
\begin{equation*}
  \begin{split}
   \sum_{n=1}^N\int_{I_n}\!
    (E_kg,E_k\cP_hz_2)_{\Gamma_\tN}\,dt
    \le C \sum_{n=1}^N\int_{I_n}\!
    \Big(\sum_{K\in\bar \cT_h^n}
     \bar h_K^{-1}\|E_kg\|_{\partial K}^2\Big)^{1/2}
     \|E_k\cP_hz_2\|.
  \end{split}
\end{equation*}
Considering this in \eqref{R:UEk}, using the Cauchy-Schwarz inequality  
we have
\begin{equation*}  
  \begin{split}
   \tR(U;E_k\cP_h z)
    &\le C\sum_{n=1}^N\int_{I_n}\!
      \bigg\{\|E_k(\dot U_1-U_2)\|\|E_k\cP_hz_1\|
      +\|E_k\bar A_hU_1\|\|E_k\cP_hz_2\|\\
    &\quad 
     +\Big\|E_k\int_0^t\! \cK(t-s)\bar A_hU_1(s)\,ds\Big\|\|E_k\cP_hz_2\|
      +\|E_kf\|\|E_k\cP_hz_2\|\\
     &\quad +\Big(\sum_{K\in\bar\cT_h^n}
     h_K^{-1}\|E_kg\|_{\partial K}^2\Big)^{1/2}
     \|E_k\cP_hz_2\|
      \bigg\}.
  \end{split}
\end{equation*}
From this, 
together with $L_2$-stability of the $L_2$-projection 
$\cP_h$ and the error estimate \eqref{errorest-Pk} for 
$E_k=\cP_k-I$, we conclude, for $\alpha=0,1,2,\ \gamma=0,1$,
\begin{equation}   \label{R:UEk:estimate}
  \begin{split}
   \tR(U;E_k\cP_h z)
    &\le C \max_{[0,T]}
    \big\{\| \partial^{\alpha \wedge 1}_t z_1(t)\| 
      ,\| \partial^\gamma_t z_2(t)\|
    \big\}
      \sum_{n=1}^N\int_{I_n}\!
     \Big\{k_n^{\alpha \wedge 1} \|\dot U_1-U_2\| \\
    &\qquad  +k_n^\gamma\|E_k\bar A_hU_1\|
     +k_n^\gamma \big\|E_k\int_0^t\! \cK(t-s)\bar A_hU_1(s)\,ds\big\| \\
    &\qquad \qquad 
     +k_n^\gamma\|E_kf\| 
     +k_n^\gamma\Big(\sum_{K\in \bar \cT_h^n}
        h_K^{-1}\|E_kg\|_{\partial K}^2\Big)^{1/2}
     \Big\}\,dt.
  \end{split}
\end{equation}

Hence, putting \eqref{R:UEh:estimate} and
\eqref{R:UEk:estimate} in \eqref{aposteriori:eq1}, 
we conclude the a posteriori error estimate
\eqref{aposteriori:estimate}, and this completes the proof.
\end{proof}

We note that, to obtaining a computable error bound in 
\eqref{aposteriori:estimate}, one aspect is to estimate or eliminate 
the norms of the derivative of the dual solutions $z_1,z_2$.  
In practice, exact solutions $z_1,z_2$ are not available. 
Therefore one way is to use accurate numerical approximations 
for $z_1,z_2$, and further the derivatives can be approximated 
by corresponding difference quotients, see \cite{BangerthGeigerRannacher} 
and references therein for practical methods for hyperbolic equations. 
Another possible way is to eliminate these terms by means of stability estimates. 

\begin{example} \label{Example}
For example, let assume that the desired output functional 
$L^*(e)$ is $\|e_1(T)\|$. 
One way, see  \cite{BangerthGeigerRannacher} for other alternatives, 
is to set $j_1=j_2=z_2^T=0$ and $z_1^T=e_1(T)$ in \eqref{aposteriori:estimate}. 
Then, with $\alpha=0,\ \beta=\gamma=1$, and using the stability 
estimate \eqref{stabilitydualineq} with $l=0$, 
we obtain the a posteriori error estimate
\begin{equation*}  
  \begin{split}
    \| e_1(T)\|
    &\le C  \bigg\{
     \|  U_1(0)-u^0\|
     +\|h_0\big(U_2(0)-v^0\big)\|\\
    &\quad +\sum_{n=1}^N\int_{I_n}\!\Big\{
     \| \dot U_1-U_2 \|+\| \bar h_n(\dot U_2-f)\|+\zeta_n(1)
      \Big(\sum_{K\in\bar\cT_h^n}\bar h_K^3
      \|g_d-g\|_{\partial K}^2\Big)^{1/2}\\
    &\quad\quad
    +\big(\zeta_n(1)+\zeta_{n,N}(1)\big)
    \Big(\sum_{K\in\bar\cT_h^n}\bar h_K^3
       \|r_d\|_{\partial K}^2\Big)^{1/2} \\
    &\quad\quad  
    +\| E_k(\dot U_1-U_2) \|
    +k_n\|E_k\bar A_hU_1\|+k_n\big\|E_k\int_0^t\!
      \cK(t-s)\bar A_hU_1(s)\,ds\big\| \\
    &\quad\quad
    +k_n\|E_kf\|
    +k_n\Big(\sum_{K\in\bar \cT_h^n}
      h_K^{-1}\|E_kg\|_{\partial K}^2\Big)^{1/2}
      \Big\}\,dt\bigg\}.
  \end{split}
\end{equation*}
\end{example}

\begin{remark} \label{rem_beta_L2}
We note that when the convolution kenel is slightly more regular 
such that $\|\cK\|_{L_2(\mathbb{R}^+)} < \infty$, following the proof of  
\eqref{II5}, we can replace $\zeta_{n,N}(\beta)$ in \eqref{aposteriori:estimate} 
by
\begin{equation*}
  \zeta_{n,N}(\beta)
   = \bar h_{min,n}^{-3/2} \Big(\int_t^T\cK^2(s-t)\ ds\Big)^{1/2}
    \sum_{j=n}^N k_j^{\frac{1}{2}} \bar h_{max,j}^{\beta-\frac{1}{2}}.
\end{equation*}
\end{remark}

\begin{remark} \label{rem_ZetanN_to_Zetaj}
In the a posteriori estimate \eqref{aposteriori:estimate}, with $\beta=2$, 
we have $\zeta_n(2)=1$, but still $\zeta_{n,N}(2)$ can be big when the spatial 
meshes change. 
We recall that the error representaion \eqref{errorrep2} has been used 
for \eqref{R:UEh}. If  we use the error representaion \eqref{errorrep1}, 
instead, the error indicator 
\begin{equation*}
  \zeta_{n,N}(\beta)
    \Big(\sum_{K\in\bar\cT_h^n}\bar h_K^3\|r_d\|_{\partial K}^2
    \Big)^{1/2},
\end{equation*}
using the notation \eqref{convolution}, is replaced by 
\begin{equation*}
  \sum_{j=1}^n \zeta_j(\beta)
    \Big(\sum_{K\in\bar\cT_h^j}\bar h_K^3\|(\cK*r_d)^j(t)\|_{\partial K}^2
    \Big)^{1/2}.
\end{equation*}
Indeed for ${\rm{\bf II}}_5$, using the Cauchy-Schwarz inequality and 
\eqref{Lemma1:est1}, we can obtain
\begin{equation*}  
  \begin{split}
    {\rm{\bf II}}_5
    &= \sum_{n=1}^N\sum_{j=1}^n\sum_{K \in \bar \cT^j_h}
      \int_{I_n}\Big( \int_{t_{j-1}}^{t_j \wedge t} \cK(t-s)r_d(s)\ ds, 
        E_hz_2 \Big)_{\partial K}\  dt\\
    &\leq \sum_{n=1}^N \int_{I_n} \sum_{j=1}^n\sum_{K \in \bar \cT^j_h}
      \Big\| \int_{t_{j-1}}^{t_j \wedge t} \cK(t-s)r_d(s)\ ds \Big\|_{\partial K} 
        \|E_hz_2\|_{\partial K}\ dt \\
    &\leq \sum_{n=1}^N \int_{I_n} \sum_{j=1}^n
      \Big(\sum_{K \in \bar \cT^j_h} \bar h_K^3
      \| (\cK*r_d)^j(t)\|_{\partial K}^2\Big)^{1/2}
      \Big(\sum_{K \in \bar \cT^j_h} \bar h_K^{-3} 
        \|E_hz_2\|_{\partial K}^2\Big)^{1/2}\ dt\\
    &\leq C \max_{[0,T]} \|\nabla^\beta z_2(t)\|
      \sum_{n=1}^N \int_{I_n} \sum_{j=1}^n \bar h_{min,j}^{\beta-2}
      \Big(\sum_{K \in \bar \cT^j_h} \bar h_K^3
      \| (\cK*r_d)^j(t)\|_{\partial K}^2\Big)^{1/2}\ dt.
  \end{split}
\end{equation*}
Hence with $\beta=2$ we get the optimal order error indicators, 
though this error indicator is not space-time cellwise. 
\end{remark}

\begin{remark} \label{rem_ZetaDelta}
We note that for the error estimate \eqref{aposteriori:estimate} 
there are two types of restriction on the  triangulations;
One by $\zeta_{n,N}$, that measures
the quasiuniformity of the family of triangulation, and the
other by $\delta_{\cF_h}$, that
measure the regularity of the family of triangulations in a
slightly different sense.
Although maybe not explicitly, but $\zeta_{n,N}$ and
$\delta_{\cF_h}$ can be related. In practice we use
finitely many triangulations, that means quasiuniformity holds, 
though possibly with big $\zeta_{n,N}$, 
see Remark \ref{rem_ZetanN_to_Zetaj}. 
This means that we still can use the a posteriori error estimate
\eqref{aposteriori:estimate}.
But when $\delta_{\cF_h}$ is not sufficiently small, the
error estimate \eqref{aposteriori:estimate} does not hold.
This calls for using local interpolants instead of global
$L_2$-projection  $\cP_h$.
In the next section we present an a posteriori error estimate
using interpolation, both in space and time, 
on each space-time cell. 
\end{remark}
\section{{\bf A posteriori error estimates based on local projections}}
We recall the decomposition of the space-time slab $\Omega^n=\Omega\times I_n$ into cells
$K^n=K\times I_n,\,K\in\bar\cT_h^n$.
Let $I_{hk}$ be a standard interpolant, for example linear in space 
and constant in time, such that 
the following error estimates hold for the error operator 
$E_{hk}v=(I_{hk}-I)v$.
\begin{align}   \label{errorIhk:1}
   \|E_{hk}v\|_{K^n}
   &\le C \big(h_K^r\|\nabla^rv\|_{K^n}
     +k_n\| \dot v\|_{K^n}\big),\quad r=1,\,2,\\ \label{errorIhk:2}
   \|\nabla E_{hk}v\|_{K^n}
   &\le C \big(h_K\|\nabla^2v\|_{K^n}
     +k_n\|\nabla \dot v\|_{K^n}\big).
\end{align}

We recall the mesh function $\bar h_n$ from \eqref{barhn}, 
and $\bar h_{max,n}=\max_{K\in \bar\cT_h^n}\bar h_K$.  
We will also use the fact that
\begin{equation}   \label{Omega:n}
  \|v\|_{\Omega^n}^2=\int_{I_n}\!\|v(t)\|^2\,dt
  \le k_n\max_{I_n}\|v(t)\|^2.
\end{equation}


\begin{theorem}
Let $u$ and $U$ be the solutions of \eqref{weakformprimary} 
and \eqref{cG}, respectively, and we assume that $\cK \in L_2(\mathbb{R}^+)$.
Then, with $e=U-u$, we have the weighted a posteriori error estimate, 
for $\alpha=1,2$,
\begin{equation}   \label{aposteriori:estimate2}
  \begin{split}
    | L^*(e)| 
    &\leq C 
     \max_{[0,T]}
     \big\{\|\nabla^{\alpha}z_1(t)\|,\|\nabla^{2}z_2(t)\|,
     \|\dot z_1(t)\|,\|\dot z_2(t)\|,
     \|\nabla\dot z_2(t)\|\big\}\\
    &\quad 
     \times \Big\{\Upsilon_0 + \sum_{n=1}^N (\Upsilon_{n,1}+\Upsilon_{n,2})\Big\}.
  \end{split}
\end{equation}
where
\begin{equation*}  
  \begin{split}
    &\Upsilon_0
    =\|h_0^\alpha \big(U_1(0)-u^0\big)\| + \|h_0^2\big(U_2(0)-v^0\big)\|,\\
   &\Upsilon_{n,1}
    =k_n^{1/2}
     \bigg\{
     \|\bar h_n^\alpha(\dot U_1-U_2)\|_{\Omega^n}
    +k_n\|\dot U_1-U_2\|_{\Omega^n}\\
   &\quad\quad
    +\|\bar h_n^2(\dot U_2-f)\|_{\Omega^n}
     +k_n\|\dot U_2-f\|_{\Omega^n}\\
    &\quad\quad
      +\Big(\sum_{K\in\bar\cT_h^n}\bar h_K^3\|r_d\|_{\partial K^n}^2
    \Big)^{1/2}
     +\Big(\sum_{K\in\bar\cT_h^n}
     \bar h_K^3\|g_h-g\|_{\partial K^n}^2
    \Big)^{1/2}\\
    &\quad\quad
     +k_n 
    \Big(\sum_{K\in\bar\cT_h^n}\bar h_K^{-1}\|r_d\|_{\partial K^n}^2
    \Big)^{1/2}
    +k_n\Big(\sum_{K\in\bar\cT_h^n}\bar h_K^{-1}
    \|g_h-g\|_{\partial K^n}^2
    \Big)^{1/2}\\
    &\quad\quad 
    +k_n
    \Big(\sum_{K\in\bar\cT_h^n}\bar h_K\|r_d\|_{\partial K^n}^2
    \Big)^{1/2}
    +k_n
    \Big(\sum_{K\in\bar\cT_h^n}\bar h_K\|g_h-g\|_{\partial K^n}^2
    \Big)^{1/2}\bigg\},
  \end{split}
\end{equation*}
\begin{equation*}  
  \begin{split}
    &\Upsilon_{n,2}
     =\int_{I_n}\sum_{j=n}^N
      k_j^{1/2}\|\cK\|_{L_2(I_j)} 
       \bigg\{
       \Big(\sum_{K\in\bar\cT_h^j}\bar h_K^3 \|r_d(t)\|_{\partial K}^2\Big)^{1/2} \\
   &\quad\quad\quad    
       +k_j\Big(\sum_{K\in\bar\cT_h^j}\bar h_K^{-1} 
          \|r_d(t)\|_{\partial K}^2\Big)^{1/2}
       +k_j\Big(\sum_{K\in\bar\cT_h^j}
          \bar h_K \|r_d(t)\|_{\partial K}^2\Big)^{1/2}       
       \bigg\}\,dt.
  \end{split}
\end{equation*}
\end{theorem}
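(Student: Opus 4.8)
The plan is to follow the architecture of the proof of the preceding theorem \eqref{aposteriori:estimate}, but with the global $L_2$-projection error replaced by the local interpolation error $E_{hk}v=(I_{hk}-I)v$ and the weighted global estimates replaced by the cellwise interpolation bounds \eqref{errorIhk:1}--\eqref{errorIhk:2}. I would start from the identity $L^*(e)=R(U;E_{hk}z)$ of \eqref{errorrep:eq0}, now with $z_{hk}=I_{hk}z$, and then insert the error representation \eqref{errorrep3} rather than \eqref{errorrep2}: the shape of $\Upsilon_{n,2}$, which carries the cells $K\in\bar\cT_h^j$ paired with $r_d(t)$ for $t\in I_n$ and the sum $\sum_{j=n}^N$, is exactly the footprint of the indicators $\Theta_{5,K}^{N,j}$, so \eqref{errorrep3} is the right vehicle for localising the memory term. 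The remaining indicators $\Theta_{0,K},\Theta_{1,K}^n,\dots,\Theta_{4,K}^n$ are common to all three representations, so they can be treated exactly as in \eqref{errorrep2}.

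The non-memory indicators are routine. For $\Theta_{1,K}^n$ and $\Theta_{2,K}^n$ I would apply Cauchy--Schwarz on the space-time cell $K^n$, bound the interpolation error by \eqref{errorIhk:1} (taking $r=\alpha$ against $z_1$ and $r=2$ against $z_2$), and sum over $K\in\bar\cT_h^n$ by Cauchy--Schwarz to form the weighted residual norms $\|\bar h_n^\alpha(\dot U_1-U_2)\|_{\Omega^n}$, $k_n\|\dot U_1-U_2\|_{\Omega^n}$, $\|\bar h_n^2(\dot U_2-f)\|_{\Omega^n}$, $k_n\|\dot U_2-f\|_{\Omega^n}$, with the dual factors $\|\nabla^\alpha z_1\|,\|\dot z_1\|,\|\nabla^2 z_2\|,\|\dot z_2\|$; the factor $k_n^{1/2}$ and the maximum over $[0,T]$ are then extracted from the dual $\Omega^n$-norms through \eqref{Omega:n}. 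For the boundary indicators $\Theta_{3,K}^n$ and $\Theta_{4,K}^n$ I would pass to volume norms by the scaled trace inequality \eqref{scaledtraceineq} on $K^n$ and then apply \eqref{errorIhk:1}--\eqref{errorIhk:2}; this produces three contributions with weights $\bar h_K^{3/2}$, $k_n\bar h_K^{-1/2}$, $k_n\bar h_K^{1/2}$ multiplying $\|\nabla^2 z_2\|$, $\|\dot z_2\|$, $\|\nabla\dot z_2\|$, which after Cauchy--Schwarz over the cells give precisely the boundary sums $(\sum\bar h_K^3\|\cdot\|^2)^{1/2}$, $k_n(\sum\bar h_K^{-1}\|\cdot\|^2)^{1/2}$, $k_n(\sum\bar h_K\|\cdot\|^2)^{1/2}$ of $\Upsilon_{n,1}$, for both $r_d$ and $g_d-g$. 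The initial indicator $\Theta_{0,K}$ is handled by Cauchy--Schwarz and a spatial interpolation estimate at $t=0$ (the temporal part of $I_{hk}$ being constant), producing $\Upsilon_0$.

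The memory term is the main obstacle, and it is the reason for the hypothesis $\cK\in L_2(\IR^+)$. Here I would estimate the indicators $\Theta_{5,K}^{N,j}$: for fixed $t\in I_n$ and $j\ge n$ I bound the inner convolution $\bigl\|\int_{t_{j-1}\vee t}^{t_j}\cK(s-t)E_{hk}z_2(s)\,ds\bigr\|_{\partial K}$ by Cauchy--Schwarz in $s$, splitting off the weight $\bigl(\int_{t_{j-1}\vee t}^{t_j}\cK^2(s-t)\,ds\bigr)^{1/2}$. Since $\cK$ is nonincreasing by \eqref{KernelProperty}, this weight is controlled by $\|\cK\|_{L_2(I_j)}$, while the remaining factor is $\|E_{hk}z_2\|_{\partial K^j}$ on the triangulation $\bar\cT_h^j$ that actually carries $z_2(s)$ for $s\in I_j$. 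Applying the scaled trace inequality and \eqref{errorIhk:1}--\eqref{errorIhk:2} on $K^j$ then reproduces the same three weighted-boundary structures as above, now with $k_j^{1/2}$ and the maxima of $\|\nabla^2 z_2\|,\|\dot z_2\|,\|\nabla\dot z_2\|$ drawn out via \eqref{Omega:n}; summing over $K\in\bar\cT_h^j$, integrating in $t$ over $I_n$ and summing over $j$ gives exactly $\Upsilon_{n,2}$. The delicate points are the bookkeeping of which triangulation each factor lives on (the residual $r_d(t)$ being evaluated on the finer mesh $\bar\cT_h^j$, as already arranged in \eqref{errorrep:intbyparts3}) and the correct pairing of the three trace weights with the three dual norms. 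Once these are in place, collecting all contributions and replacing the dual norms by their maxima over $[0,T]$ yields \eqref{aposteriori:estimate2}, with the five dual quantities $\|\nabla^\alpha z_1\|,\|\nabla^2 z_2\|,\|\dot z_1\|,\|\dot z_2\|,\|\nabla\dot z_2\|$ being exactly those that arise.
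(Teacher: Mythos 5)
Your proposal is correct and follows essentially the same route as the paper: although the paper formally starts from \eqref{errorrep2}, its first step in estimating the memory term is precisely to split $\int_t^T\!=\sum_{j=n}^N\int_{t\vee t_{j-1}}^{t_j}$ and pass to cells $K\in\bar\cT_h^j$, i.e.\ to the $\Theta_{5,K}^{N,j}$ structure of \eqref{errorrep3} that you take as your starting point, so the two choices of representation coincide in practice. All remaining steps in your plan --- Cauchy--Schwarz cell by cell, the interpolation bounds \eqref{errorIhk:1}--\eqref{errorIhk:2}, the scaled trace inequality \eqref{scaledtraceineq}, the Cauchy--Schwarz splitting of the convolution that isolates $\|\cK\|_{L_2(I_j)}$ under the hypothesis $\cK\in L_2(\mathbb{R}^+)$, and the extraction of $k_n^{1/2}$ and the maxima via \eqref{Omega:n} --- are exactly those of the paper's proof.
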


\begin{proof}
We write the error representation \eqref{errorrep2} as
\begin{equation}   \label{apost:2:eq0}
  \begin{split}
    L^*(e)
    &=\sum_{K\in\cT_h^0}\Theta_{0,K}
     +\sum_{i=1}^5\sum_{n=1}^N \sum_{K\in\bar\cT_h^n}
      \Theta_{i,K}^n
    ={\rm{\bf I}}_0+\sum_{i=1}^5 {\rm{\bf I}}_i.
  \end{split}
\end{equation}

First we estimate ${\rm{\bf I}}_0$. To this end, recalling $\Theta_{0,K}$
from \eqref{thetas}, we use the Cauchy-Schwarz inequality and the interpolation error estimate \eqref{errorIhk:1} to obtain, for $\alpha=1,2$,
\begin{equation*}
  \begin{split}
    \sum_{K\in\cT_h^0}\big(U_1(0)-u^0,E_{hk}z_1(0)\big)_K
    &\le \sum_{K\in\cT_h^0}\|U_1(0)-u^0\|_K \|E_{hk}z_1(0)\|_K\\
    &\le C\sum_{K\in\cT_h^0}\|U_1(0)-u^0\|_K 
      h_K^\alpha\|\nabla^\alpha z_1(0)\|_K\\
    &\le C\|\nabla^\alpha z_1(0)\|
      \Big(
      \sum_{K\in\cT_h^0}h_K^{2\alpha}\|U_1(0)-u^0\|_K^2
     \Big)^{1/2}\\
    &=C\|\nabla^\alpha z_1(0)\| \| h_0^\alpha \big(U_1(0)-u^0\big)\|.
  \end{split}
\end{equation*}
Similarly we have
\begin{equation*}
  \begin{split}
    \sum_{K\in\cT_h^0}\big(U_2(0)-v^0,E_{hk}z_2(0)\big)_K
    \le C\|\nabla^{2} z_2(0)\| \|h_0^{2}\big(U_2(0)-v^0\big)\|.
  \end{split}
\end{equation*}
From these two estimates we conclude
\begin{equation}   \label{aposteriori2:est1}
  \begin{split}
    {\rm{\bf I}}_0  
    \le C \max \!\big\{\! \|\nabla^\alpha z_1(0)\|,\|\nabla^{2}z_2(0)\| \! \big\}\!
    \big\{\!
     \|h_0^\alpha \big(U_1(0)-u^0\big)\|+\|h_0^{2}\big(U_2(0)-v^0\big)\| \!
    \big\}.
  \end{split}
\end{equation}

For the next term, using the Cauchy-Schwarz inequality and
the error estimate \eqref{errorIhk:1}, we have, for  $\alpha=1,2$,
\begin{equation*}   
  \begin{split}
    {\rm{\bf I}}_1
    &\le \sum_{n=1}^N\sum_{K\in\bar\cT_h^n}
    \|\dot U_1-U_2\|_{K^n}\|E_{hk}z_1\|_{K^n}\\
    &\le C \sum_{n=1}^N\sum_{K\in\bar\cT_h^n}
    \|\dot U_1-U_2\|_{K^n}
    \big(\bar h_K^\alpha\| \nabla^\alpha z_1\|_{K^n}
    +k_n\| \dot z_1\|_{K^n}\big)\\
    &\le C \sum_{n=1}^N
    \Big(\sum_{K\in\bar\cT_h^n}\bar h_K^{2\alpha}
     \|\dot U_1-U_2\|_{K^n}^2\Big)^{1/2}
    \Big(\sum_{K\in\bar\cT_h^n}\|\nabla^\alpha z_1\|_{K^n}^2\Big)^{1/2}\\
    &\quad +C \sum_{n=1}^N k_n
    \Big(\sum_{K\in\bar\cT_h^n}
     \|\dot U_1-U_2\|_{K^n}^2\Big)^{1/2}
    \Big(\sum_{K\in\bar\cT_h^n}
     \| \dot z_1\|_{K^n}^2\Big)^{1/2}\\
    &\leq C \sum_{n=1}^N\Big\{
     \|\bar h_n^\alpha(\dot U_1-U_2)\|_{\Omega^n} 
        \|\nabla^\alpha z_1\|_{\Omega^n}
     +k_n\|\dot U_1-U_2\|_{\Omega^n}
     \|\dot z_1\|_{\Omega^n}\Big\},
  \end{split}
\end{equation*}
that using \eqref{Omega:n} we have
\begin{equation}   \label{aposteriori2:est2}
  \begin{split}
    {\rm{\bf I}}_1
    \le C\max_{[0,T]}\big\{ \|\nabla^\alpha z_1(t)\|, \|\dot z_1(t)\| \big\}\! 
    \sum_{n=1}^N \! k_n^{1/2}\!
     \big\{
     \|\bar h_n^\alpha(\dot U_1-U_2)\|_{\Omega^n}
     +k_n\|\dot U_1-U_2\|_{\Omega^n}
     \big\}.
  \end{split}
\end{equation}
In the same way we obtain
\begin{equation}   \label{aposteriori2:est3}
  \begin{split}
    {\rm{\bf I}}_2
    \le C\max_{[0,T]}\big\{\|\nabla^{2}z_2(t)\|,\|\dot z_2(t)\| \big\}\!
     \sum_{n=1}^N k_n^{1/2}
     \big\{
     \|\bar h_n^{2}(\dot U_2-f)\|_{\Omega^n}
     +k_n\|\dot U_2-f\|_{\Omega^n}
     \big\}.
  \end{split}
\end{equation}
Now for ${\rm{\bf I}}_3$, we use the Cauchy-Schwarz inequality,
the scaled trace inequality \eqref{scaledtraceineq}, 
and the error estimates 
\eqref{errorIhk:1}--\eqref{errorIhk:2} to obtain,
\begin{equation*}   
  \begin{split}
  {\rm{\bf I}}_3
    &\le \sum_{n=1}^N\sum_{K\in\bar\cT_h^n}
      \|g_d-g\|_{\partial K^n}
      \|E_{hk}z_2\|_{\partial K^n}\\
    &\le C\sum_{n=1}^N\sum_{K\in\bar\cT_h^n}
      \|g_d-g\|_{\partial K^n}
      \big\{
     \bar h_K^{-1/2}\|E_{hk}z_2\|_{K^n}+\bar h_K^{1/2}\|\nabla E_{hk}z_2\|_{K^n}
      \big\}\\
    &\le C\sum_{n=1}^N\sum_{K\in\bar\cT_h^n}
      \|g_d-g\|_{\partial K^n}\\
    &\quad \times
      \big\{
      2\bar h_K^{3/2}\|\nabla^{2} z_2\|_{K^n}
      +\bar h_K^{-1/2}k_n\|\dot z_2\|_{K^n}
      +\bar h_K^{1/2}k_n\|\nabla \dot z_2\|_{K^n}
      \big\}\\
    &\le C\sum_{n=1}^N
      \bigg\{
      \Big(\sum_{K\in\bar\cT_h^n}\bar h_K^{3}\|g_d-g\|_{\partial K^n}^2
      \Big)^{1/2} \| \nabla^{2}z_2\|_{\Omega^n}\\
    &\quad\quad\quad 
      +k_n
      \Big(\sum_{K\in\bar\cT_h^n}\bar h_K^{-1}\|g_d-g\|_{\partial K^n}^2
      \Big)^{1/2} \|\dot z_2\|_{\Omega^n}\\
    &\quad\quad\quad 
      +k_n
      \Big(\sum_{K\in\bar\cT_h^n}\bar h_K\|g_d-g\|_{\partial K^n}^2
      \Big)^{1/2} \|\nabla\dot z_2\|_{\Omega^n}\bigg\},
  \end{split}
\end{equation*}
that using \eqref{Omega:n} we have
\begin{equation}   \label{aposteriori2:est4}
  \begin{split}
  {\rm{\bf I}}_3
    &\le C\max_{[0,T]}\big\{\|\nabla^{2}z_2(t)\|,
     \|\dot z_2(t)\|,
     \|\nabla\dot z_2(t)\|\big\}\\
    &\quad\times \sum_{n=1}^N k_n^{1/2}
    \bigg\{
    \Big(\sum_{K\in\bar\cT_h^n}\bar h_K^{3}\|g_d-g\|_{\partial K^n}^2
    \Big)^{1/2}
    +k_n
    \Big(\sum_{K\in\bar\cT_h^n}\bar h_K^{-1}\|g_d-g\|_{\partial K^n}^2
    \Big)^{1/2}\\
    &\quad\quad \quad
    +k_n
    \Big(\sum_{K\in\bar\cT_h^n}\bar h_K\|g_d-g\|_{\partial K^n}^2
    \Big)^{1/2} \bigg\}.
  \end{split}
\end{equation}
And similarly
\begin{equation}   \label{aposteriori2:est5}
  \begin{split}
  {\rm{\bf I}}_4
    &\le C\max_{[0,T]}\big\{\|\nabla^{2}z_2(t)\|,
     \|\dot z_2(t)\|,
     \|\nabla\dot z_2(t)\|\big\}\\
    &\quad\times \sum_{n=1}^N k_n^{1/2}
    \bigg\{
    \Big(\sum_{K\in\bar\cT_h^n}\bar h_K^{3}\|r_d\|_{\partial K^n}^2
    \Big)^{1/2}
    +k_n
    \Big(\sum_{K\in\bar\cT_h^n}\bar h_K^{-1}\|r_d\|_{\partial K^n}^2
    \Big)^{1/2}\\
    &\quad\quad \quad
    +k_n
    \Big(\sum_{K\in\bar\cT_h^n}\bar h_K\|r_d\|_{\partial K^n}^2
    \Big)^{1/2} \bigg\}.
  \end{split}
\end{equation}

Finally we find an estimate for ${\rm{\bf I}}_5$ which includes the convolution terms. 
First, recalling the definition of $\Theta_{5,K}^n$ from \eqref{thetas}, 
we can write ${\rm{\bf I}}_5$ as,
\begin{equation*}   
  \begin{split}
    {\rm{\bf I}}_5
    &=-\sum_{n=1}^N\int_{I_n}\!\int_s^T\! \cK(s-t)\sum_{K\in\bar\cT_h^n}
      \big(r_d(t),E_{hk}z_2(s)\big)_{\partial K}\,ds\,dt \\
    &=-\sum_{n=1}^N\int_{I_n}\sum_{j=n}^N\int_{t\vee t_{j-1}}^{t_j} 
      \cK(s-t)\sum_{K\in\bar\cT_h^j}
      \big(r_d(t),E_{hk}z_2(s)\big)_{\partial K}\,ds\,dt .
  \end{split}
\end{equation*}
Then, using the Cauchy-Schwarz inequality and the assumption 
$\cK \in L_2(\mathbb{R}^+)$, we have
\begin{equation*} 
  \begin{split}
   | {\rm{\bf I}}_5 |
    &\leq \sum_{n=1}^N\int_{I_n}\sum_{j=n}^N\sum_{K\in\bar\cT_h^j}
      \|r_d(t)\|_{\partial K} 
      \int_{t\vee t_{j-1}}^{t_j} \cK(s-t) \|E_{hk}z_2(s)\|_{\partial K}\,ds\,dt \\
    &\leq \sum_{n=1}^N\int_{I_n}\sum_{j=n}^N\sum_{K\in\bar\cT_h^j}
      \|r_d(t)\|_{\partial K}
      \Big(\int_{t\vee t_{j-1}}^{t_j} \cK^2(s-t)\ ds\Big)^{1/2}\\
    &\qquad\qquad\qquad\qquad\qquad\qquad\quad \times 
     \Big( \int_{t\vee t_{j-1}}^{t_j} \|E_{hk}z_2(s)\|_{\partial K}^2\,ds\Big)^{1/2}\,dt \\
    &\leq \sum_{n=1}^N\int_{I_n}\sum_{j=n}^N\sum_{K\in\bar\cT_h^j}
      \|r_d(t)\|_{\partial K} \,dt \ \|\cK\|_{L_2(I_j)}
      \|E_{hk}z_2(s)\|_{\partial K^j},
  \end{split}
\end{equation*}
that,  using the scaled trace inequality \eqref{scaledtraceineq}, 
the error estimates \eqref{errorIhk:1}--\eqref{errorIhk:2}, 
and the Cauchy-Schwarz inequality over the triangles, we have
\begin{equation*}   
  \begin{split}
   | {\rm{\bf I}}_5 |
     &\leq C \sum_{n=1}^N\int_{I_n}\sum_{j=n}^N
      \|\cK\|_{L_2(I_j)}
       \bigg\{
       \Big(\sum_{K\in\bar\cT_h^j}\bar h_K^3 \|r_d(t)\|_{\partial K}^2\Big)^{1/2} 
       \| \nabla^2 z_2(s)\|_{\Omega^j} \\
      &\qquad
       +k_j\Big(\sum_{K\in\bar\cT_h^j}\bar h_K^{-1}\|r_d(t)\|_{\partial K}^2\Big)^{1/2} 
       \| \dot z_2(s)\|_{\Omega^j} \\
      &\qquad 
       +k_j\Big(\sum_{K\in\bar\cT_h^j}\bar h_K \|r_d(t)\|_{\partial K}^2\Big)^{1/2} 
       \| \nabla \dot z_2(s)\|_{\Omega^j} \bigg\}\,dt.
  \end{split}
\end{equation*}
Hence, by \eqref{Omega:n}, we conclude 
\begin{equation}   \label{aposteriori2:est6}
  \begin{split}
   | {\rm{\bf I}}_5 |
   &\leq C \max_{[0,T]}
     \big\{\|\nabla^{2}z_2(t)\|,
     \|\dot z_2(t)\|,
     \|\nabla\dot z_2(t)\|\big\}
     \sum_{n=1}^N\int_{I_n}\sum_{j=n}^N
      \|\cK\|_{L_2(I_j)} k_j^{\frac{1}{2}}\\
   &\qquad\quad\times 
       \bigg\{
       \Big(\sum_{K\in\bar\cT_h^j}\bar h_K^3 \|r_d(t)\|_{\partial K}^2\Big)^{1/2} 
       +k_j\Big(\sum_{K\in\bar\cT_h^j}\bar h_K^{-1} 
          \|r_d(t)\|_{\partial K}^2\Big)^{1/2}\\
   &\qquad\qquad\quad 
       +k_j\Big(\sum_{K\in\bar\cT_h^j}\bar h_K \|r_d(t)\|_{\partial K}^2\Big)^{1/2}       
         \bigg\}\,dt.
  \end{split}
\end{equation}
Putting estimates \eqref{aposteriori2:est1}--\eqref{aposteriori2:est6} in 
\eqref{apost:2:eq0} we conclude the  a posteriori error estimate 
\eqref{aposteriori:estimate2}. Now the proof is complete. 
\end{proof}
\begin{remark}
We note that, for example, we can compute $\|r_d\|_{\partial K^n}$ as
\begin{equation*}
  \begin{split}
     \|r_d\|_{\partial K^n}
    &=\Big(\int_{I_n}\!\big\|\frac{t-t_n}{-k_n}r_d(t_{n-1})
     +\frac{t-t_{n-1}}{k_n}r_d(t_n)\big\|_{\partial K}^2\, dt\Big)^{1/2}\\
    &\le \frac{k_n^{1/2}}{\sqrt{3}}\Big(\|r_d(t_{n-1})
     \|_{\partial K}^2
     +\|r_d({t_n})\|_{\partial K}^2\Big)^{1/2}\\
    &\le \sqrt{\frac{2}{3}}k_n^{1/2}\big(\|r_d(t_{n-1})
     \|_{\partial K}+\|r_d({t_n})\|_{\partial K}\big).
  \end{split}
\end{equation*}
\end{remark}

\begin{remark}
We recall that to prove the a posteriori error estimate 
\eqref{aposteriori:estimate2}, we used the second  
error representaion \eqref{errorrep2} in \eqref{apost:2:eq0}. 
Therefore, the error indicator $\Upsilon_{n,2}$ is not space-time 
cellwise, since the convolution integral applies to the 
error term $E_{hk}z_2$.  Besides, we need the assumption 
$\cK \in L_2(\mathbb{R}^+)$ for the kernel, 
that does not apply to weakly singular 
kernels for which we have $\cK \in L_1(\mathbb{R}^+)$. 
Hence for this case, when $\cK \in L_1(\mathbb{R}^+)$, the error indicator 
$\Upsilon_{n,2}$ is not fulfilled, and we need to use the first 
error representation \eqref{errorrep1} in \eqref{apost:2:eq0}. 
Now we just need to estimate the new ${\rm{\bf I}}_5$, 
that using  the notation \eqref{convolution} is writen in the form 
\begin{equation*}   
  \begin{split}
    {\rm{\bf I}}_5
    &=-\sum_{n=1}^N\int_{I_n}\!
    \sum_{j=1}^n \sum_{K\in\bar\cT_h^j}
    \Big(\int_{t_{j-1}}^{t\wedge t_j} \cK(t-s)r_d(s)\ ds,
      E_{hk}z_2(t)\Big)_{\partial K}\,dt\\
    &=-\sum_{n=1}^N\int_{I_n}\!
    \sum_{j=1}^n\sum_{K\in\bar\cT_h^j}
    \big((\cK*r_d)^j(t),E_{hk}z_2(t)\big)_{\partial K}\,dt.
  \end{split}
\end{equation*}

Then, by the Cauchy-Schwarz inequality and the scaled trace 
inequality \eqref{scaledtraceineq}, we have
\begin{equation*}   
  \begin{split}
    |{\rm{\bf I}}_5|
    &\le C\sum_{n=1}^N\int_{I_n}\sum_{j=1}^n
    \bigg\{
    \Big(\sum_{K\in\bar\cT_h^j}\bar h_K^{-1}
    \|(\cK*r_d)^j(t)\|_{\partial K}^2
    \Big)^{1/2}\\
    &\quad\quad \times
    \Big(\sum_{K\in\bar\cT_h^j}\bar h_K
    \|E_{hk}z_2(t)\|_{\partial K}^2
    \Big)^{1/2} \bigg\}\ dt \\
    &\le C\sum_{n=1}^N\int_{I_n}\sum_{j=1}^n
    \bigg\{
    \Big(\sum_{K\in\bar\cT_h^j}\bar h_K^{-1}
    \|(\cK*r_d)^j(t)\|_{\partial K}^2
    \Big)^{1/2}\\
    &\quad\quad \times
    \Big(\sum_{K\in\bar\cT_h^j}\|E_{hk}z_2(t)\|_K^2
    +\bar h_K^2 \|\nabla E_{hk}z_2(t)\|_K^2
    \Big)^{1/2}\bigg\}\ dt \\
    &\le C\sum_{n=1}^N\int_{I_n}\sum_{j=1}^n
    \bigg\{
    \Big(\sum_{K\in\bar\cT_h^j}\bar h_K^{-1}
    \|(\cK*r_d)^j(t)\|_{\partial K}^2
    \Big)^{1/2}\\
    &\quad\quad \times
    \Big(\|E_{hk}z_2(t)\|+\bar h_{max,j}\|\nabla E_{hk}z_2(t)\|
    \Big) \bigg\}\ dt \\
    &= C\sum_{n=1}^N\int_{I_n}\!
    \|E_{hk}z_2(t)\|\sum_{j=1}^n
    \Big(\sum_{K\in\bar\cT_h^j}\bar h_K^{-1}
    \|(\cK*r_d)^j(t)\|_{\partial K}^2
    \Big)^{1/2}\,dt\\
    &\quad +C\sum_{n=1}^N\int_{I_n}\!
    \|\nabla E_{hk}z_2(t)\|
    \sum_{j=1}^n\bar h_{max,j}
    \Big(\sum_{K\in\bar\cT_h^j}\bar h_K^{-1}
    \|(\cK*r_d)^j(t)\|_{\partial K}^2
    \Big)^{1/2}\,dt,
  \end{split}
\end{equation*}
and using the Cauchy-Schwarz inequality in the integrals over $I_n$, we have 
\begin{equation*}  
  \begin{split}
    |{\rm{\bf I}}_5|\
    &\le C \sum_{n=1}^N \|E_{hk}z_2\|_{\Omega^n}
    \Bigg(\int_{I_n}\!\bigg(
    \sum_{j=1}^n
    \Big(\sum_{K\in\bar\cT_h^j}\bar h_K^{-1}
    \|(\cK*r_d)^j(t)\|_{\partial K}^2
    \Big)^{1/2}\bigg)^2\,dt\Bigg)^{1/2}\\
    &\qquad+C\sum_{n=1}^N \|\nabla E_{hk}z_2\|_{\Omega^n}\\
    &\quad\quad\quad\times
    \Bigg(\int_{I_n}\!\bigg(
    \sum_{j=1}^n \bar h_{max,j}
    \Big(\sum_{K\in\bar\cT_h^j}\bar h_K^{-1}
    \|(\cK*r_d)^j(t)\|_{\partial K}^2
    \Big)^{1/2}\bigg)^2\,dt\Bigg)^{1/2}.
  \end{split}
\end{equation*}

Since by the error estimate \eqref{errorIhk:1} we have
\begin{equation*} 
  \begin{split}
    \int_{I_n}\!\|E_{hk}z_2(t)\|^2\,dt
    &=\int_{I_n}\!\sum_{K\in\bar\cT_h^n}
    \|E_{hk}z_2(t)\|_K^2\,dt
    =\sum_{K\in\bar\cT_h^n}
    \|E_{hk}z_2(t)\|_{K^n}^2\\
    &\le C \sum_{K\in\bar\cT_h^n}
    \big(
    \bar h_K^{4}\|\nabla^{2}z_2\|_{K^n}^2
    +k_n^{2}\| \dot z_2\|_{K^n}^2\big)\\
    &\le C\big(
    \bar h_{max,n}^{4}\|\nabla^{2}z_2\|_{\Omega^n}^2
    +k_n^{2}\|\dot z_2\|_{\Omega^n}^2 \big),
  \end{split}
\end{equation*}
and similarly by \eqref{errorIhk:2} we have
\begin{equation*} 
  \begin{split}
    \int_{I_n}\!\|\nabla E_{hk}z_2(t)\|^2\,dt
    &=\int_{I_n}\!\sum_{K\in\bar\cT_h^n}
    \|\nabla E_{hk}z_2(t)\|_K^2\,dt
    =\sum_{K\in\bar\cT_h^n}
    \|\nabla E_{hk}z_2(t)\|_{K^n}^2\\
    &\le C \sum_{K\in\bar\cT_h^n}
    \big(
    \bar h_K^2\|\nabla^2z_2\|_{\tilde K^n}^2
    +k_n^2\|\nabla \dot z_2\|_{\tilde K^n}^2\big)\\
    &\le C\big(
    \bar h_{max,n}^2\|\nabla^2z_2\|_{\Omega^n}^2
    +k_n^2\|\nabla \dot z_2\|_{\Omega^n}^2 \big),
  \end{split}
\end{equation*}
then, recalling \eqref{Omega:n}, we conclude the estimate
\begin{equation*}  
  \begin{split}
    {\rm{\bf I}}_5
    &\le C \max_{[0,T]}\Big\{\|\nabla^2z_2(t)\|,
     \|\dot z_2(t)\|,
     \|\nabla\dot z_2(t)\|\Big\}
    \Bigg\{
    \sum_{n=1}^N
    k_n^{1/2}\big(\bar h_{max,n}^2+k_n\big)\\
    &\quad\quad\quad\times
    \Bigg(\int_{I_n}\!\bigg(
    \sum_{j=1}^n
    \Big(\sum_{K\in\bar\cT_h^j}\bar h_K^{-1}
    \|(\cK*r_d)^j(t)\|_{\partial K}^2
    \Big)^{1/2}\bigg)^2\,dt\Bigg)^{1/2}\\
    &\quad\quad+C\sum_{n=1}^N
    k_n^{1/2}\big(\bar h_{max,n}+k_n\big)\\
    &\quad\quad\quad\times
    \Bigg(\int_{I_n}\!\bigg(
    \sum_{j=1}^n \bar h_{max,j}
    \Big(\sum_{K\in\bar\cT_h^j}\bar h_K^{-1}
    \|(\cK*r_d)^j(t)\|_{\partial K}^2
    \Big)^{1/2}\bigg)^2\,dt\Bigg)^{1/2} \Bigg\}.
  \end{split}
\end{equation*}
This means that, for the case $\cK \in L_1(\mathbb{R}^+)$, 
the error indicator $\Upsilon_{n,2}$ in the a posteriori error estimate \eqref{aposteriori:estimate2} is
\begin{equation*}   
  \begin{split}
    & k_n^{1/2}
    \Bigg\{
    \big(\bar h_{max,n}^2+k_n\big)\Bigg(\int_{I_n}\!\bigg(
    \sum_{j=1}^n
    \Big(\sum_{K\in\bar\cT_h^j}\bar h_K^{-1}
    \|(\cK*r_d)^j(t)\|_{\partial K}^2
    \Big)^{1/2}\bigg)^2\,dt\Bigg)^{1/2}\\
    &\quad\ \ 
    +\big(\bar h_{max,n}+k_n\big)\Bigg(\int_{I_n}\!\bigg(
    \sum_{j=1}^n \bar h_{max,j}
    \Big(\sum_{K\in\bar\cT_h^j}\bar h_K^{-1}
    \|(\cK*r_d)^j(t)\|_{\partial K}^2
    \Big)^{1/2}\bigg)^2\,dt\Bigg)^{1/2} \Bigg\}.
  \end{split}
\end{equation*}
\end{remark}

We note that the last a posteriori error estimate presented in
\eqref{aposteriori:estimate2}, does not have the restrictions
that were mentioned in Remark \ref{rem_ZetaDelta}.

\section{{\bf Conclusion}}
In this work, a space-time continuous Galerkin finite element method has been 
applied to a hyperbolic type integro-differential equation. 
To provide the main tools for adaptive strategies, mainly based on the DWR 
approach, we have presented three error representations in Theorem 1.  
The main difference between these a posteriori error representations 
is that  we apply the convolution integral 
either on the residual $r_d$ or on the error term $E_{hk}z_2$.  
Obviously we choose the cheaper one, depending on the method 
we choose for computing or estimating $E_{hk}z_2$. 

Evaluation of a posteriori error representations require information about 
the dual solution. One way is to find bounds for the dual solution in certain 
Sobolev norms, that can be estimated, e.g., by means of stability estimates.  
In Theorem 2 we have presented a weighted global a posteriori error 
estimate, that is used when the goal functional is global. The error indicators 
have been separated, such that it can be used as a basis for independent 
adaptation of the time  step and spatial mesh. 
However, for this global error estimate, the  triangulations 
are required to be quasiuniform and also fulfill a certain kind of regularity, 
see Remark \ref{rem_ZetaDelta}. 
This restriction and also when we use local goal functionals call for 
local projections. In Theorem 3 standard local projections have been used 
to obtain a posteriori error estimates. 

For adaptive strategies the error representations can be used to 
accurately estimate the actual errors for getting a stopping criterion of 
the adaptation process. Then we can use the global/local a posteriori 
error estimates for steering the adaptation process. 

Sparse quadrature can be used to overcome  the problem 
with the growing amount of data, in the convolution term, 
that has to be stored and used in time stepping methods. 
However, we note that this is not an issue for exponentially decaying 
memory kernels, in linear viscoelasticity, that are represented as a Prony series.  
This is due to the existence of a recurrence formula for history updating. 
We plan to address adaptivity strategies based on sparse quadrature 
together with numerical adaptation methods such as the DWR approach, 
using the theory presented here, in future work.

\textbf{Acknowledgment.} 
I would like thank Prof. Stig Larsson for helpful discussion and 
constructive comments. 

\bibliographystyle{amsplain}
\bibliography{Thesis}

\end{document}